\title{On the alternating randomized block Kaczmarz method}
\author{Nian-Ci Wu\footnotemark[1]\ \footnotemark[4] ,
        Yang Zhou\footnotemark[2] , \and Zhaolu Tian\footnotemark[3]}
\begin{document}
%\setmainfont{lmroman10-italic}
\maketitle
\renewcommand{\thefootnote}{\fnsymbol{footnote}}

\footnotetext[1]{School of Mathematics and Statistics, South-Central Minzu University, Wuhan 430074, China.}
\footnotetext[2]{School of Mathematics and Computational Science, Huaihua University, Huaihua 418000, China.}
\footnotetext[3]{College of Applied Mathematics, Shanxi University of Finance and Economics, Taiyuan 030006, China.}
\footnotetext[4]{Corresponding author. E-mail address: {\sf  nianciwu@scuec.edu.cn} (N.-C. Wu).}
\renewcommand{\thefootnote}{\arabic{footnote}}

\begin{abstract}
The block Kaczmarz method and its variants are designed for solving the over-determined linear system. They involves iteratively projecting the current point onto the solution space of a subset of constraints. In this work, by alternately dealing with two subproblems (i.e., linear system with multiple right-hand sides) using the block Kaczmarz method, we propose the {\it Alternating Randomized Block Kaczmarz} (ARBK) method to solve the linear matrix equation $AXB=F$, which incorporates a randomized index selection scheme to determine the subset of constraints. The convergence analysis reveals that the ARBK method has a linear convergence rate bounded by an explicit expression. Several numerical studies have been conducted to validate the theoretical findings.
 \end{abstract}
\begin{keywords}
linear matrix equation, randomized block Kaczmarz, alternating direction implicit iteration, multiple right-hand sides
\end{keywords}

\begin{AMS}
65F10, 65F25, 65F30
\end{AMS}

%65F10   	Iterative numerical methods for linear systems
%65F25   	Orthogonalization in numerical linear algebra
%65F30   	Other numerical matrix algorithms

\pagestyle{myheadings}
\thispagestyle{plain}
\markboth{The alternating randomized block Kaczmarz method}{N.-C. Wu, Y. Zhou, and Z. Tian}

\section{Introduction}
Geometric iterative method (GIM) is widely used in large-scale data fitting because of its intuitive geometric meaning and efficiency; see \cite{16CHL,14DL,02Farin, 22JWH, 02LPRM,18LMD,97PT} and the references therein. For the case of surface fitting \cite{20Liu}, it often leads to a large sparse system of linear matrix equations formed by
\begin{equation}\label{AXB=F}
AXB=F.
\end{equation}
Here, $A \in \mathbb{R}^{m\times n}$ and $B \in \mathbb{R}^{p\times q}$ are the collocation matrices of the B-spline based on the parameter sequences and knot vectors, $F \in \mathbb{R}^{m\times q}$ is a data-point coordinate matrix, and $X \in \mathbb{R}^{n\times p}$ is an control coefficient matrix to be solved. There are no restrictions on the dimensions of $A$ and $B$ depending on the amount of data in a given experiment.

Iterative algorithms are often well-studied for solving the above large-scale problem, such as matrix splitting iterative method \cite{11Bai,20Liu,17Tian,13Wang} and gradient-type iterative method \cite{08Ding, 03Liao}. In the application of surface fitting with the number of data points being very large, the matrix $A$ is thin/tall (i.e., $m > n$) and $B$ is fat/flat (i.e., $p<q$), and so sparse that only a rather few number of elements can be stored. This naturally limits the scope of the matrix splitting iterative method.

Gradient-type iterative method is an optimization algorithm that the next iterate follows the negative gradient of an objective function in order to locate the minimum of the function. Let $G(X)$ be a differentiable function.  It is formulated by
\begin{align*}
  X^{(k+1)} = X^{(k)} - \alpha \nabla G(X^{(k)})
\end{align*}
for $k=0,1,2,\cdots$, where $\alpha$ is a step-size and $\nabla G(X)$ is the gradient of $G(X)$. Some well-known methods can be written in this form with appropriate choices of $G(X)$. For example,
\begin{enumerate}[$\diamond$]
\setlength{\itemindent}{0.5cm}
\addtolength{\itemsep}{-0.1em} % 设置文献之间的间距
\item gradient-based iterative method \cite[Theorem 2]{08Ding}:
\begin{align*}
  G(X) = \frac{1}{2}\bF{C-AXB}^2;
\end{align*}
%\vskip 1ex
\item Kaczmarz (also known as row-action) method \cite[Equation (4.2)]{22WLZ}:
\begin{align*}
  G (X) = \frac{1}{2}\frac{|F_{ij}-A_{i,:} X B_{:,j}|^2}{\| A_{i,:} \|^2 \| B_{:,j} \|^2},
\end{align*}
where the symbols $M_{i,:}$, $M_{:,j}$,  and $M_{i,j}$ denote the $i$th row, $j$th column, and $(i,j)$th element for matrix $M$, respectively.
\end{enumerate}

Due to the cheap per iteration arithmetic cost and low total computational complexity, the row-action method \cite{81Censor} becomes overwhelming for solving matrix equation \eqref{AXB=F} with general coefficient matrices, such as the global randomized block Kaczmarz algorithm and and its pseudoinverse-free form in \cite{22Niu};
 the sketch-and-project method for solving matrix equation in \cite{23BGLLW}, which unifies a variety of randomized iterative methods by varying the parameter matrices;
 the matrix equation relaxed greedy randomized Kaczmarz and  maximal weighted residual Kaczmarz methods in \cite{22WLZ}.
 We also refer the reader to a recent comprehensive review of solving matrix equation in \cite{16Sim} and the references therein for more details.

The {\it Alternating Direction Implicit} (ADI) method \cite{55PR} was introduced by Peaceman and Rachford specifically for solving the partial differential equations. Inspired by the idea of ADI, we consider to solve matrix equation \eqref{AXB=F} in the $X$ and $Y$ directions alternatively.
A computational framework of ADI for solving \eqref{AXB=F} is presented as follows.

\vskip 0.5ex
\begin{framed} % add box
\begin{algorithmic}[]
\setlength{\itemindent}{0.5cm}
\State \qquad {\bf for} $k=0,1,2,\cdots$ {\bf do}
\State \qquad \quad solve $AY = F$ and obtain $Y^{(k+1)}$
\State \qquad \quad solve $XB=Y^{(k+1)}$ and obtain $X^{(k+1)}$
\State \qquad {\bf endfor}.
\end{algorithmic}
\end{framed}
\vskip 0.5ex

Actually, the ADI process requires solving two subproblems, i.e., linear systems with multiple right-hand sides, formed by
\begin{align}\label{MatrixAXB=F}
AY=F \quad {\rm and} \quad XB=Y~~({\rm or} ~~B^TX^T = Y^T),
\end{align}
where $X$ and $Y$ are unknown. Recently, Xing et al. \cite{23XBL} utilized two signal Kaczmarz iterates \cite{37Kac} and proposed a so-called consistent matrix equation randomized Kaczmarz (CME-RK) method for \eqref{AXB=F}. From geometric aspect, we find that, in the CME-RK method, the auxiliary quantity  $ Y_{:,j}^{(k+1)}$ and the next approximation $ X_{i,:}^{(k+1)}$ are the projections of $ Y_{:,j}^{(k)}$ and $X_{i,:}^{(k)}$ onto the solution space of $A_{i,:} X = F_{i,:}$ and $X B_{:,j} = Y^{(k+1)}_{:,j}$, respectively, in which only one single equation is used. It is natural to project the current point onto the solution space of many equations simultaneously by selecting a block of rows or columns. To the best of our knowledge, the block variant of CME-RK is new and has not been previously proposed or analyzed.

In this work, we will propose a randomized block Kaczmarz-type algorithm to solve \eqref{AXB=F} and call it the {\it Alternating Randomized Block Kaczmarz} (ARBK) method. The organization of this work is as follows. We first present the ARBK method in Section \ref{AXB=F:ARBK} and then establish its convergence theory in Section \ref{AXB=F:ARBK_Conver}. %To further generate the ARBK method, we utilize the ARBK method to solve the general Sylvester matrix equation and analyze its convergence in Section \ref{AXB=F:Sylvester}.
Numerical experiments are illustrated in Section \ref{AXB=F:Numer} to show the effectiveness of our method. Finally, we end this paper with some conclusions in Section \ref{AXB=F:Conclusion}.

Some notation and definitions used in this paper are as follows.  For any $M\in \mathbb{R}^{m\times n}$, we use $M^{T}$, $M^{\dag}$, $|| M ||_{2}$, $|| M ||_{F}$, ${\rm Tr}(M)$, $\sigma_{\max}(M)$, and $\sigma_{\min}(M)$ to denote the transpose, the Moore-Penrose pseudoinverse, the 2-norm, the Frobenius norm, the trace, the maximum singular value, and the minimum singular value, respectively. In addition, we denote by $M_{U,:}$ and $M_{:,V}$ the submatrices of $M$,  whose rows and columns are indexed by the index sets $U$ and $V$, respectively. For any two matrices $M_1$ and $M_2$ with same sizes, we define the Frobenius inner product $\langle M_1,M_2 \rangle_{F} = {\rm Tr}(M_1^{T}M_2) = {\rm Tr}(M_1M_2^{T})$. Specially, $\langle M_1,M_1\rangle_{F}=|| M_1 ||_F^2$. The identity matrix of size $n$ is given by $I_n$. For any integer $n$, we use $[n]$ to express the set $\left\{1,2,\cdots, n \right\}$. In the end the symbol $|U|$ denotes the cardinality of a set $U$.

\section{The ARBK method}\label{AXB=F:ARBK} In this section, we first briefly review the CME-RK method in \cite{23XBL}, then develop the ARBK method to solve matrix equation $AXB=F$ by transforming the random index selection into matrix form, and finally  discuss the randomized index selection scheme determining the subset of constraints.

\subsection{The CME-RK method} The CME-RK  method includes two basic computational procedures. The first-half step applies a signal Kaczmarz iterate to the system $AY=F$ and outputs  $Y^{(k+1)}$, and the second-half step utilizes another signal Kaczmarz iterate on the system $XB =Y^{(k+1)}$ and outputs $X^{(k+1)}$. Continuing with this process alternatively, the CME-RK   method in \cite{23XBL} is formulated as Algorithm \ref{alg:CME-RK}.

\begin{algorithm}[!htb]
\caption{The CME-RK method (\cite[Algorithm 2.1]{23XBL})}\label{alg:CME-RK}
\begin{algorithmic}[1]
\Require
The  matrices $A \in \mathbb{R}^{m\times n}$ and $B \in \mathbb{R}^{p\times q}$ , and a right-hand side $F \in \mathbb{R}^{m\times q}$, initial matrices $X^{(0)} \in \mathbb{R}^{n\times p}$, $Y^{(0)} = X^{(0)}B$.
\State {\bf for} $k=0,1,2,\cdots,K-1$ {\bf do}
\State \quad select $i\in [m]$ at random with probability $P\left({\rm Index} =  i_k \right) =  \| A_{i_k,:}\|^2/\| A\|^2_F$;
\State \quad compute
\begin{align}\label{ARBK:CME-RK_iterate1}
Y^{(k+1)} = Y^{(k)} + \frac{1}{\| A_{i_k,:}\|^2} A_{i_k,:}^T \left(F_{i_k,:} - A_{i_k,:}  Y^{(k)} \right);
\end{align}
\State \quad select $i\in [m]$ at random with probability$P\left({\rm Index} =  j_k \right) =  \| B_{:,j_k}\|^2/\| B\|^2_F$;
\State \quad compute
\begin{align}\label{ARBK:CME-RK_iterate}
X^{(k+1)} = X^{(k)} + \frac{1}{\| B_{:,j_k}\|^2} \left(Y^{(k+1)}_{:,j_k} - X^{(k)}  B_{:,j_k}  \right)  B_{:,j_k}^T;
\end{align}
\State {\bf endfor}.
\Ensure
$X^{(K)}$.
\end{algorithmic}
\end{algorithm}

We observe that the Kaczmarz iterates in \eqref{ARBK:CME-RK_iterate1} and \eqref{ARBK:CME-RK_iterate} can be induced by using the Petrov-Galerkin  conditions (PGC) \cite{Saad2003}. The process is elaborated in the following remark.

\begin{remark}
Let $ \mathcal{L} $ and  $ \mathcal{K} $ be the constrained and search subspaces, respectively. According to PGC, we need to assume that the $j$th column of $Y^{(k+1)}$ to $AY_{:,j}=F$, i.e., $Y_{:,j}^{(k+1)}$, belongs to the affine space $Y^{(k)}_{:,j}+ \mathcal{K} $  and satisfy the constraint $F_{:,j}-AY_{:,j}^{(k+1)}$ being orthogonal to $ \mathcal{L} $ with $j\in [q]$ for $k=0,1,2,\cdots$. Namely,
\begin{align}\label{ARBK:PG_RK}
 Y^{(k+1)}_{:,j} \in Y^{(k)}_{:,j} + \mathcal{K}  ~~ and ~~  F_{:,j}-AY^{(k+1)}_{:,j} \perp \mathcal{L}.
\end{align}
In particular, assume that
\begin{equation*}
    \mathcal{K} ={\rm span}\left\{A_{i,:}^T\right\}
   ~~ and ~~
    \mathcal{L} ={\rm span}\left\{\mu_i\right\},
\end{equation*}
where $\mu_{i}$ is the $i$th column of  $I_m$ for $i\in [m]$, the constrains in \eqref{ARBK:PG_RK} can be equivalently rewritten by
\begin{align*}
 Y^{(k+1)}_{:,j} = Y^{(k)}_{:,j} +  \eta A_{i,:}^T
 ~~ and ~~
   \left\langle F_{:,j}-AY^{(k+1)}_{:,j},~\mu_i \right\rangle  =0,
\end{align*}
where $\eta$ is a constant. After an elementary calculation, we have
\begin{align*}
 Y^{(k+1)}_{:,j}
 = Y^{(k)}_{:,j} +  A_{i,:}^T \left( \mu_i^T A A_{i,:}^T \right)^\dag \mu_i^T \left(F_{:,j}-AY^{(k)}_{:,j}\right)
 = Y^{(k)}_{:,j} +  \frac{F_{i,j}-A_{i,:}^TY^{(k+1)}_{:,j}}{\|A_{i,:}\|^2}A_{i,:}^T.
\end{align*}
Then, it follows that
\begin{align*}
  Y^{(k+1)} & =
  \begin{bmatrix}
    Y^{(k+1)}_{:,1} & Y^{(k+1)}_{:,2}& \cdots & Y^{(k+1)}_{:,q}
  \end{bmatrix} \\
  & = Y^{(k)}  + \begin{bmatrix}
    \frac{F_{i,1}-A_{i,:}^TY^{(k+1)}_{:,1}}{\|A_{i,:}\|^2}A_{i,:}^T & \frac{F_{i,2}-A_{i,:}^TY^{(k+1)}_{:,2}}{\|A_{i,:}\|^2}A_{i,:}^T & \cdots &
    \frac{F_{i,q}-A_{i,:}^TY^{(k+1)}_{:,q}}{\|A_{i,:}\|^2}A_{i,:}^T
  \end{bmatrix} \\
  & = Y^{(k)} + \frac{1}{\| A_{i,:}\|^2} A_{i,:}^T \left(F_{i,:} - A_{i,:}  Y^{(k)} \right).
\end{align*}
It is a similar story for computing $X^{(k+1)}$, so we omit the derivation.
\end{remark}

The convergence result of the CME-RK method in \cite{23XBL} is presented as follows. For more details, we refer to Theorem 2.1 in this reference.

\vskip 0.5ex
\begin{theorem}
(\cite[Theorem 2.1]{23XBL})
 The sequence $\left\{X^{(k)}\right\}_{k=0}^{\infty}$ generated by the CME-RK method starting from the initial matrix
 $X^{(0)} \in \mathbb{R}^{n\times p}$ and $Y^{(0)} = X^{(0)}B$, converges linearly to the solution $X^{\ast} := A^{\dag} F B^{\dag}$ of the consistent matrix equation \eqref{AXB=F} in mean square if the $\left(X^{(0)}_{i,:}\right)^T$ and $ Y^{(0)}_{:,j}$ are the column spaces of $B$ and $A^T$ for $i\in[n]$ and $j\in[q]$, respectively.
Moreover, the following relationship holds
\begin{align*}
E\left[||X^{(k+1)} - X^\ast||_F^2\right] \leq
\left( 1 + \frac{\sigma_{\max}^2(B)}{||B||_F^2}  \sum\limits_{\ell=0}^{k} \rho_1^{\ell+1}/\rho_2^{\ell+1} \right) \rho_2^{k+1}
||X^{(0)} - X^\ast ||_F^2,
\end{align*}
where the convergence factors $\rho_1 = 1 -  \sigma_{\min}^2(A)/||A||_F^2$ and $\rho_2 = 1 -  \sigma_{\min}^2(B)/||B||_F^2$.
\end{theorem}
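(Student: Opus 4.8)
The plan is to decouple the two interleaved Kaczmarz sweeps, treating the $Y$-sweep as an autonomous randomized Kaczmarz process that injects a controllable perturbation into the $X$-sweep. First I would pin down the fixed points. Set $Y^\ast=A^\dagger F$; consistency of \eqref{AXB=F} (so that $F=AX_0B$ for some $X_0$) gives $Y^\ast B^\dagger B=A^\dagger A X_0 B B^\dagger B=A^\dagger F=Y^\ast$, whence $X^\ast B=A^\dagger F B^\dagger B=Y^\ast$. The hypotheses on $Y^{(0)}_{:,j}$ and $(X^{(0)}_{i,:})^T$ keep every iterate's columns (resp.\ transposed rows) inside $\mathrm{range}(A^T)$ (resp.\ $\mathrm{range}(B)$), which both identifies $X^\ast$ as the correct minimum-norm target and licenses the $\sigma_{\min}$ bounds below. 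Because $i_k$ is drawn independently of the $X$-history, $\{Y^{(k)}\}$ is exactly the randomized Kaczmarz iteration for $AY=F$ run simultaneously on all right-hand sides, so the standard one-step contraction summed over columns gives $E[\|Y^{(k)}-Y^\ast\|_F^2]\le\rho_1^{\,k}\|Y^{(0)}-Y^\ast\|_F^2$.

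Second, I would analyze a single $X$-step as an orthogonal splitting. Writing $e_X^{(k)}=X^{(k)}-X^\ast$, $e_Y^{(k)}=Y^{(k)}-Y^\ast$, and $R_{j}=B_{:,j}B_{:,j}^T/\|B_{:,j}\|^2$, the identity $X^\ast B_{:,j_k}=Y^\ast_{:,j_k}$ yields
\[
e_X^{(k+1)}(I-R_{j_k})=e_X^{(k)}(I-R_{j_k}),\qquad e_X^{(k+1)}R_{j_k}=\frac{1}{\|B_{:,j_k}\|^2}\left(e_Y^{(k+1)}\right)_{:,j_k}B_{:,j_k}^T.
\]
Since $R_{j_k}$ is an orthogonal projector the two pieces are Frobenius-orthogonal, so the Pythagorean identity $\|e_X^{(k+1)}\|_F^2=\|e_X^{(k)}(I-R_{j_k})\|_F^2+\|(e_Y^{(k+1)})_{:,j_k}\|^2/\|B_{:,j_k}\|^2$ holds exactly. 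Taking the conditional expectation over $j_k$ and using that the transposed rows of $e_X^{(k)}$ lie in $\mathrm{range}(B)$, so that $\|e_X^{(k)}B\|_F^2\ge\sigma_{\min}^2(B)\|e_X^{(k)}\|_F^2$, contracts the first term by $\rho_2$, while the probability weights collapse the second to $\|e_Y^{(k+1)}\|_F^2/\|B\|_F^2$.

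Third, I would couple the two estimates through the initialization. Since $Y^{(0)}=X^{(0)}B$ and $X^\ast B=Y^\ast$, we have $e_Y^{(0)}=e_X^{(0)}B$, hence $E[\|e_Y^{(k+1)}\|_F^2]\le\rho_1^{k+1}\sigma_{\max}^2(B)\|e_X^{(0)}\|_F^2$. Feeding this into the one-step bound produces the inhomogeneous recursion $a_{k+1}\le\rho_2 a_k+c\,\rho_1^{k+1}$ with $a_k=E[\|e_X^{(k)}\|_F^2]$ and $c=\sigma_{\max}^2(B)\|e_X^{(0)}\|_F^2/\|B\|_F^2$; unrolling gives $a_{k+1}\le\rho_2^{k+1}a_0+c\sum_{\ell=0}^{k}\rho_1^{\ell+1}\rho_2^{k-\ell}$, which is precisely the claimed bound once $\rho_2^{k+1}$ is factored out of the sum.

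The step I expect to be the main obstacle is the fixed-point bookkeeping underlying the clean splitting: one must verify that consistency together with the initial subspace conditions really forces $X^\ast B=Y^\ast$ and keeps the transposed rows of both error sequences inside $\mathrm{range}(B)$, for otherwise the selected constraint would not collapse exactly onto the $e_Y$ term and the $\sigma_{\min}(B)$ contraction would break. The remaining quantitative heart is handling the moving target $Y^{(k+1)}$ in the $X$-sweep --- recognizing that its deviation enters only through the already-controlled quantity $e_Y^{(k+1)}$ --- and then solving the resulting inhomogeneous linear recursion.
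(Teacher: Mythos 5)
Your proposal is correct and takes essentially the same route as the paper's own analysis: the paper only cites this CME-RK theorem, but its proof of the block generalization (Lemma~\ref{AY-norma2} together with Theorem~\ref{Thm:ARBK}) proceeds exactly as you do --- an autonomous contraction lemma for the $Y$-sweep, a Pythagorean splitting of the $X$-error in which the cross term vanishes by pseudoinverse properties, a conditional-expectation step contracting the first term by $\rho_2$ while the inhomogeneous term is controlled by the $Y$-lemma, and an unrolled recursion coupled through $Y^{(0)}-Y^{\ast}=(X^{(0)}-X^{\ast})B$ with $\|Y^{(0)}-Y^{\ast}\|_F^2\leq\sigma_{\max}^2(B)\|X^{(0)}-X^{\ast}\|_F^2$. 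Your argument is precisely the singleton-block, row/column-norm-sampling specialization of that proof, including the subspace-invariance bookkeeping needed for the $\sigma_{\min}$ bounds.
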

\vskip 0.5ex

\subsection{The proposed method}
For $i\in [m]$ and $j\in[q]$, the $i$th row of $A$ and the $j$th column of $B$  can be written by
\begin{align*}
  A_{i,:} = \mu_i^T A ~{\rm and}~B_{:,j} = B \nu_j,
\end{align*}
respectively, where $\nu_j$ is the $j$th column of $I_q$. It implies the random selection of rows or columns in the CME-RK method is equivalent to use two independent random vectors $\mu_i^T$ and $\nu_j$ to respectively multiply the left of $A$ or the right of $B$. This random strategy can be generalized by matrices to sketch the rows and columns in any desirable way.

Let the constrained subspace and  the search subspace correspond to $ \mathcal{L} ={\rm span}\{ W_1 \}$ and  $ \mathcal{K} = {\rm span}\{ Z_1 \}$, respectively, where $W_1$ and $Z_1$ are two parameter matrices. Applying PGC to the first linear system $AY=F$, the $j$th column of $Y^{(k+1)}$ is computed by
\begin{align*}
Y^{(k+1)}_{:,j}  = Y^{(k)}_{:,j} + Z_1\left(W_1^T A Z_1\right)^\dag W_1^T \left(F_{:,j}-AY^{(k)}_{:,j}\right)
\end{align*}
for $k=0,1,2,\cdots$. Equivalently,
\begin{align}\label{eq:PG-iterate1}
Y^{(k+1)}  = Y^{(k)} + Z_1\left(W_1^T A Z_1\right)^\dag W_1^T \left(F-AY^{(k)}\right).
\end{align}
After giving another two parameter matrices $W_2$ and $Z_2$, for the second linear system $XB=Y^{(k+1)}$, a similar iteration scheme is given by
\begin{align}\label{eq:PG-iterate2}
X^{(k+1)}  = X^{(k)} + \left(Y^{(k+1)} - X^{(k)}B \right)W_2^T \left(Z_2 B W_2^T\right)^\dag Z_2.
\end{align}

%\vskip 0.5ex
%\begin{remark}
%By choosing specific parameter matrices $W_1$, $Z_1$, $W_2$, and $Z_2$, we obtain some different variants of computing $X^{(k+1)}$. For example, let the four subspaces be
%  $\mathcal{K}_1={\rm span}\left\{ A_{i,:}^T  \right\}$,
%  $\mathcal{L}_1={\rm span}\left\{ \mu_{i} \right\}$,
%  $\mathcal{K}_2={\rm span}\left\{ B_{:,j}^T  \right\}$, and
%  $\mathcal{L}_2={\rm span}\left\{ \nu_{j} \right\}$. Then,  the iteration schemes in the CME-RK method are recovered.
%\end{remark}
%\vskip 0.5ex

Geometrically, rather than project the current points ($X^{(k)},~Y^{(k)}$) onto the solution space of only one equation in the CME-RK method,  we project them onto the solution space of many equations simultaneously by selecting a block of rows or columns. It can be achieved by taking
\begin{equation*}
   \mathcal{K} ={\rm span}\left\{ A_{U_{i},:}^T  \right\}
   ~ {\rm and} ~
   \mathcal{L} ={\rm span}\left\{ I_{:,U_{i}} \right\},
\end{equation*}
where $U_{i,:}$ is a subset of row index $[m]$ and $I_{:,U_{i}}$ denotes a column concatenation of $I_m$ indexed by $U_{i}$. Under this circumstance,  we obtain a block Kaczmarz iteration as follows.
\begin{align*}%\label{ME-BK_Y}
Y^{(k+1)} = Y^{(k)} + A^{\dag}_{U_{i},:}\left(F_{U_{i},:} - A_{U_{i},:} Y^{(k)}\right)
\end{align*}
for each iteration $k \geq 1$. We follow the same route and derive another block Kaczmarz iteration as
\begin{align*}%\label{ME-BK_X}
X^{(k+1)} = X^{(k)} + \left(Y_{:,V_{j}}^{(k+1)} - X^{(k)} B_{:,V_{j}}\right) B^{\dag}_{:,V_{j}},
\end{align*}
where $V_{j} \subset [q]$ is a column index subset.

These four parameter matrices $W_1$, $Z_1$, $W_2$, and $Z_2$ can also be viewed as the preconditioner to accelerate the convergence or a selector of the particular solution of the stochastic optimization problem, which will  affect the solution set of the best approximation problem and its dual; for example, see \cite{20LR}. We emphasize that the above list is by no means comprehensive and merely serves the purpose to illustrate the elasticity  of formulas \eqref{eq:PG-iterate1} and \eqref{eq:PG-iterate2}.

The above yields the ARBK computational framework for solving matrix equation \eqref{AXB=F} which is summarized in Algorithm \ref{alg:1}.

\vskip 0.5ex
\begin{algorithm}[!htbp]
\caption{The $\mathrm{ARBK}$ method for $AXB=F$}\label{alg:1}
\begin{algorithmic}[1]
\Require
The  matrices $A \in \mathbb{R}^{m\times n}$ and $B \in \mathbb{R}^{p\times q}$ , and a right-hand side $F \in \mathbb{R}^{m\times q}$, initial matrices $X^{(0)} \in \mathbb{R}^{n\times p}$, $Y^{(0)} = X^{(0)}B$, row index set $S=\left\{U_0,U_1,\cdots,U_{s-1}\right\}$, column index set $T=\left\{V_0,V_1,\cdots,V_{t-1}\right\}$.
\State {\bf for} $k=0,1,2,\cdots,K-1$ {\bf do}
\State \quad select $U_{i_k}\in S$ uniformly at random;
\State \quad compute
\begin{align*}
Y^{(k+1)} = Y^{(k)} + A^{\dag}_{U_{i_k},:}\left (F_{U_{i_k},:} - A_{U_{i_k},:} Y^{(k)}\right);
 \end{align*}
\State \quad select $V_{j_k}\in T$ uniformly at random;
\State \quad compute
\begin{align*}
X^{(k+1)} = X^{(k)} + \left(Y^{(k+1)}_{:,V_{j_k}} - X^{(k)} B_{:,V_{j_k}}\right) B^{\dag}_{:,V_{j_k}};
\end{align*}
\State {\bf endfor}.
\Ensure
$X^{(K)}$.
\end{algorithmic}
\end{algorithm}

\vskip 0.5ex
\begin{remark}
If one chooses the row and column index sets to be $U_{i_k} = \left\{ i_k \right\}$ and $V_{j_k} = \left\{ j_k \right\}$ with $i_k\in [m]$ and $j_k \in [q]$, the iteration schemes of computing $Y^{(k+1)}$ and $X^{(k+1)}$ in the ARBK method automatically reduce to formulas \eqref{ARBK:CME-RK_iterate1} and \eqref{ARBK:CME-RK_iterate}, respectively. It comes from the fact that $A^{\dag}_{U_{i_k},:} =  A_{i_k,:}^T/\| A_{i_k,:}\|^2$ and $B^{\dag}_{:,V_{j_k}} = B_{:,j_k}^T/\| B_{:,j_k}\|^2$.
\end{remark}

\vskip 0.5ex
\begin{remark}\label{ARBK:remark_CMERKcon}
  In practice, there are infinitely many ways to construct a well-defined probability criterion to select $(U_{i_k},V_{j_k})$. For example, the index sets in the CME-RK method \cite{23XBL} are selected according to
  \begin{align*}
    P\left(U_{i_k} =  i_k \right) =  \frac{\| A_{i_k,:}\|^2}{\| A\|^2_F}~~and~~
    P\left(V_{j_k} =  j_k \right) =  \frac{\| B_{:,j_k}\|^2}{\| B\|^2_F}
  \end{align*}
  for $i_k \in [m]$ and  $j_k \in [p]$. In addition, the index sets in the GRBK method \cite{22Niu} are selected according to the non-uniformly probability distributions, i.e.,
  \begin{align*}
    P\left(U_{i_k} \right) =  \frac{\| A_{U_{i_k},:}\|^2_F}{\| A\|^2_F}~~and~~
    P\left(V_{j_k} \right) =  \frac{\| B_{:,V_{j_k}}\|^2_F}{\| B\|^2_F}
  \end{align*}
  for $U_{i_k} \subset [m]$ and  $V_{j_k} \subset [p]$.
\end{remark}

\vskip 0.5ex
\begin{remark}
The conditioning of the blocks $A_{U_{i_k},:}$ and $B_{:,V_{j_k}}$ in the ARBK method may play an important role in determining its convergence. If each block is well-conditioned, its pseudoinverse can be efficiently computed by using an inexact inner iterative method such as the conjugate gradient method (see, e.g., \cite{Saad2003}) for the least-squares problem. If not and to avoid calculating the pseudoinverse directly, very  recently, Wu and Chang proposed a semi-randomized block Kaczmarz method with simple random sampling for solving \eqref{MatrixAXB=F} \cite{22WC}, where the index sets are constructed by utilizing the partially greedy selection rule. Note that it will be an interesting work to extend this technique to solve matrix equation \eqref{AXB=F} incorporated with the idea of ADI.
\end{remark}

\subsection{How to construct the index sets}  To ensure the desirable properties, such as the well-condition of the sub-matrices and the index selection being easy to implement, Needell and Tropp employed a index {\it paving} strategy for a matrix as described below \cite{14NT}.
\vskip 1.5ex
\begin{definition}\label{ARBK:definition_paving}
(\cite[Definition 1.1]{14NT}) An $(s, \alpha_A, \beta_A)$ row paving of a matrix $A$ is a partition  of the row indices $S=\left\{U_0,U_1,\cdots,U_{s-1}\right\}$ that verifies
\begin{align*}
  \alpha_A \leq \lambda_{\min}\left(A_{U,:}A_{U,:}^T\right)
  ~~  and  ~~
  \lambda_{\max}\left(A_{U,:}A_{U,:}^T\right) \leq \beta_A
\end{align*}
for each $U \in S$. The number $s$ of blocks is called the size of the paving. The numbers $\alpha_A$ and $\beta_A$ are called lower and upper paving bounds. The ratio $\beta_A/\alpha_A$ gives a uniform bound on the squared condition number $\kappa^2(A_{U,:})$ for each subset $U$. % Note that $ \alpha_A= 0$ unless each submatrix $A_{U,:}$ is fat.
\end{definition}

\vskip 1.5ex
In a similar manner, we can define the column paving for matrix $B$.
\vskip 1.5ex

\begin{definition}\label{ARBK:definition_pavingB}
 An $(t, \alpha_B, \beta_B)$ column paving of a matrix $B$ is a partition  of the column indices $T=\left\{V_0,V_1,\cdots,V_{t-1}\right\}$ that verifies
\begin{align*}
  \alpha_B \leq \lambda_{\min}\left(B_{:,V}^T B_{:,V}\right)
  ~~ and ~~
  \lambda_{\max}\left(B_{:,V}^T B_{:,V}\right) \leq \beta_B
\end{align*}
for each $V \in T$. The number $t$ of blocks is called the size of the paving. The numbers $\alpha_B$ and $\beta_B$ are called lower and upper paving bounds. The ratio $\beta_B/\alpha_B$ gives a uniform bound on the squared condition number $\kappa^2(B_{:,V})$ for each subset $V$.
\end{definition}

Based on data random sampling, several results appeared in randomized algorithms; see, for instance, references \cite{23DGR,14NT,15NZZ,08WLRT}. The sampled subset can be drawn from some specific distributions, such as subsampled random Fourier transform \cite{08WLRT}, subsampled random Hadamard transform \cite{23DGR}, and fast incoherence transform (FIT) \cite{14NT}. Specially, the FIT random matrix is defined by $\widehat{S} = \widehat{F} \widehat{E}$, where $\widehat{F}$ is an unitary discrete Fourier transform matrix and $\widehat{E}$ is a diagonal matrix whose entries are independent Rademacher random variables. This FIT random matrix can make sure that the rows or column of coefficient matrices exhibit a small amount of diversity. For more details, we refer to Proposition 3.7 in \cite{14NT}.

Closely related to the data random sampling is the random partition, which is a popular representative among pavings due to its simplicity and efficiency \cite{14NT}. The idea is to divide the row or column index of the matrix into random blocks with approximately equal size. The specific implementation is described as follows.

\vskip 0.5ex
\begin{definition}\label{ARBK:definition-RP}
 \cite[Definition 3.2]{14NT}. Suppose that $\pi_U$ is a permutation on $\{1, 2, \cdots, m\}$, chosen uniformly at random. For each $i = 1,2,\cdots,s$, define the set
 \begin{align*}
   U_{i-1} = \left\{
   \pi_U(k):~k =
   \lfloor (i-1)m/s\rfloor+1,
   \lfloor (i-1)m/s\rfloor+2,\cdots,
   \lfloor im/s\rfloor
   \right\}.
 \end{align*}
We call $S=\left\{U_0,U_1,\cdots,U_{s-1}\right\}$ as a row random partition of $[m]$ with $s$ blocks.
\end{definition}

\vskip 0.5ex
Similarly, we define another random partition of column index as follows.

\vskip 0.5ex
\begin{definition}\label{ARBK:definition-RP2}
Suppose that $\pi_V$ is a permutation on $\{1, 2, \cdots, q\}$, chosen uniformly at random. For each $j = 1,2,\cdots,t$, define the set
 \begin{align*}
   V_{j-1} = \left\{
   \pi_V(k):~k =
   \lfloor (j-1)q/t\rfloor+1,
   \lfloor (j-1)q/t\rfloor+2,\cdots,
   \lfloor jq/t\rfloor
   \right\}.
 \end{align*}
We call $T=\left\{V_0,V_1,\cdots,V_{t-1}\right\}$ as a column random partition of $[q]$ with $t$ blocks.
\end{definition}

%
%In the, the probability distributions of selecting these two sets satisfy that
%\begin{align*}
%  \sum_{U_{i}\subset [m]}\widehat{P}(U_{i})=\sum_{V_{j}\subset [q]}\widetilde{P}(V_{j})=1.
%\end{align*}
%

\section{Convergence analysis of the ARBK method}\label{AXB=F:ARBK_Conver} The aim of this section is to demonstrate the convergence property of the ARBK method. First we present two lemmas which we will use in our convergence proof.

\begin{lemma}\label{AY-norma1}
{\rm (\cite[Lemma 1]{22DRS})}~Let $A\in \mathbb{R}^{m\times n}$ and $B\in \mathbb{R}^{p\times q}$ be given. Let
\begin{align*}
  \mathcal{R}(A, B):=\left\{ M \in \mathbb{R}^{n\times p}~|~\exists~ Z \in \mathbb{R}^{m\times q}~s.t. ~M = A^TZB^T \right\}.
\end{align*}
For any matrix $M\in \mathcal{R}(A, B)$, it holds that
\begin{align*}
||AMB||_F^2 \geq \sigma_{\min}^2(A)\sigma_{\min}^2(B)||M||_F^2.
\end{align*}
\end{lemma}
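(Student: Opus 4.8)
The plan is to reduce the matrix inequality to a vector-level estimate and then peel the two factors $A$ and $B$ off one at a time, using the defining property of $\mathcal{R}(A,B)$ exactly where it is needed. The central observation is that membership $M\in\mathcal{R}(A,B)$ forces every column of $M$ to lie in $\mathrm{range}(A^T)$ and every column of $M^T$ to lie in $\mathrm{range}(B)$; since these are precisely the orthogonal complements of $\ker(A)$ and $\ker(B^T)$, the maps $A$ and $B^T$ act on them with a controlled minimal stretch governed by the smallest nonzero singular value. Throughout I read $\sigma_{\min}(\cdot)$ as the smallest nonzero singular value, which is what makes the restriction to $\mathcal{R}(A,B)$ essential rather than vacuous.

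First I would isolate the key sublemma: for any $C\in\mathbb{R}^{a\times b}$ and any $v\in\mathrm{range}(C^T)$ one has $\|Cv\|_2\ge\sigma_{\min}(C)\|v\|_2$. This follows immediately from the thin SVD $C=\sum_{i=1}^{r}\sigma_i u_i v_i^T$ with $r=\mathrm{rank}(C)$ and $\sigma_r=\sigma_{\min}(C)>0$: since $\mathrm{range}(C^T)=\mathrm{span}\{v_1,\dots,v_r\}$, writing $v=\sum_{i=1}^{r}c_i v_i$ gives $Cv=\sum_{i=1}^{r}c_i\sigma_i u_i$ and hence $\|Cv\|_2^2=\sum_{i=1}^{r}\sigma_i^2 c_i^2\ge\sigma_r^2\sum_{i=1}^{r}c_i^2=\sigma_{\min}^2(C)\|v\|_2^2$.

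Next I would assemble the two factors. Write $M=A^TZB^T$. Peeling off $A$ first, note that each column of $MB$ is a linear combination of the columns of $M$, and every column of $M=A^TZB^T$ lies in $\mathrm{range}(A^T)$; hence every column of $MB$ lies in $\mathrm{range}(A^T)$. Applying the sublemma with $C=A$ column by column and summing the squared column norms (which recovers the Frobenius norm) yields $\|AMB\|_F^2\ge\sigma_{\min}^2(A)\|MB\|_F^2$. To peel off $B$, pass to the transpose: $\|MB\|_F^2=\|B^TM^T\|_F^2$, and every column of $M^T=BZ^TA$ lies in $\mathrm{range}(B)=\mathrm{range}((B^T)^T)$. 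Applying the sublemma with $C=B^T$ column by column (and using $\sigma_{\min}(B^T)=\sigma_{\min}(B)$) gives $\|B^TM^T\|_F^2\ge\sigma_{\min}^2(B)\|M^T\|_F^2=\sigma_{\min}^2(B)\|M\|_F^2$. Chaining the two estimates produces the claim $\|AMB\|_F^2\ge\sigma_{\min}^2(A)\sigma_{\min}^2(B)\|M\|_F^2$.

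The step I expect to require the most care is the bookkeeping of the range containments, specifically checking that right-multiplication by $B$ does not destroy the membership of the columns in $\mathrm{range}(A^T)$, and, symmetrically, that the transpose step correctly converts the row-space constraint on $M$ into a column-space constraint in $\mathrm{range}(B)$ suitable for the sublemma. Everything else is a routine column-wise expansion of the Frobenius norm. It is worth emphasizing that the constraint $M\in\mathcal{R}(A,B)$ is used in an essential way: without it the columns of $M$ could have components in $\ker(A)$, on which $A$ contracts to zero, and the inequality would fail.
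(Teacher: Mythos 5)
Your proof is correct. Note first that the paper itself contains no proof of this lemma: it is quoted as a known result from \cite[Lemma 1]{22DRS}, so there is no in-paper argument to compare against, and your proposal in effect supplies the missing proof. Your two-stage peeling argument is sound: the columns of $MB$ are linear combinations of the columns of $M=A^TZB^T$ and hence stay in the subspace $\mathrm{range}(A^T)$, which justifies the columnwise application of your sublemma to get $\|AMB\|_F^2\ge\sigma_{\min}^2(A)\|MB\|_F^2$; transposing and repeating with $C=B^T$ (using $\sigma_{\min}(B^T)=\sigma_{\min}(B)$ and that the columns of $M^T=BZ^TA$ lie in $\mathrm{range}(B)$) finishes the job. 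Your reading of $\sigma_{\min}$ as the smallest \emph{nonzero} singular value is also the right one; it is exactly what makes the restriction to $\mathcal{R}(A,B)$ meaningful rather than vacuous. For comparison, the standard route in the cited reference goes through vectorization: $\mathrm{vec}(AMB)=(B^T\otimes A)\,\mathrm{vec}(M)$, and $M\in\mathcal{R}(A,B)$ means $\mathrm{vec}(M)\in\mathrm{range}(B\otimes A^T)=\mathrm{range}\left((B^T\otimes A)^T\right)$, after which one invokes the fact that the nonzero singular values of a Kronecker product are the pairwise products of the nonzero singular values of its factors. That argument handles both factors in a single stroke and makes the role of $\mathcal{R}(A,B)$ as a combined row/column-space constraint transparent, but it requires the Kronecker spectral fact; your version is more elementary, needing only the thin SVD of one matrix at a time and routine Frobenius-norm bookkeeping.
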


\begin{lemma}\label{AY-norma2}
Let the first matrix equation in \eqref{MatrixAXB=F} be consistent and $S$ be the row random partition of $[m]$ defined by Definition \ref{ARBK:definition-RP}.  Starting from the initial matrix $Y^{(0)}\in \mathcal{R}(A, I_{p\times q})$, the sequence $\left\{Y^{(k)}\right\}_{k=0}^{\infty}$ generated by Algorithm \ref{alg:1} converges linearly to the minimum Frobenius-norm solution $Y^\ast = A^{\dag} F$ in mean square form. Moreover, the solution error in expectation for the iteration sequence obeys
\begin{equation}\label{E_Yk}
E\left[||Y^{(k)} - Y^\ast||_F^2\right] \leq \widehat{\gamma}^{k} ||Y^{(0)} - Y^\ast||_F^2,
\end{equation}
where the convergence factor $\widehat{\gamma} = 1 - \sigma_{\min}^2(A)/(s\beta_A)$.
\end{lemma}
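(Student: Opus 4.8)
The plan is to track the error matrix $e^{(k)} := Y^{(k)} - Y^\ast$ and to show that one $Y$-update of Algorithm \ref{alg:1} contracts its expected squared Frobenius norm by the factor $\widehat{\gamma}$. First I would derive the error recursion. Since $AY=F$ is consistent and $Y^\ast = A^\dag F$ satisfies $A_{U_{i_k},:} Y^\ast = F_{U_{i_k},:}$, subtracting $Y^\ast$ from the $Y$-update yields
\begin{align*}
e^{(k+1)} = \left( I_n - A^{\dag}_{U_{i_k},:} A_{U_{i_k},:}\right) e^{(k)}.
\end{align*}
By the paving condition each block $A_{U_{i_k},:}$ has full row rank, so $P_{i_k} := A^{\dag}_{U_{i_k},:} A_{U_{i_k},:}$ is the orthogonal projector onto the row space of $A_{U_{i_k},:}$. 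As $I_n - P_{i_k}$ is symmetric and idempotent, the Pythagorean identity gives
\begin{align*}
\| e^{(k+1)} \|_F^2 = \| e^{(k)} \|_F^2 - \| P_{i_k} e^{(k)} \|_F^2.
\end{align*}

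Next I would convert the projected term into a residual term via the upper paving bound. Using a reduced SVD $A_{U_{i_k},:} = \Phi \Sigma \Psi^T$ (full row rank, $P_{i_k} = \Psi\Psi^T$), one finds $\| A_{U_{i_k},:} e^{(k)} \|_F^2 = \| \Sigma \Psi^T e^{(k)} \|_F^2 \le \lambda_{\max}\!\left(A_{U_{i_k},:}A_{U_{i_k},:}^T\right)\| \Psi^T e^{(k)} \|_F^2 \le \beta_A \| P_{i_k} e^{(k)} \|_F^2$, where Definition \ref{ARBK:definition_paving} is invoked. Hence $\| P_{i_k} e^{(k)} \|_F^2 \ge \beta_A^{-1}\| A_{U_{i_k},:} e^{(k)} \|_F^2$, so that
\begin{align*}
\| e^{(k+1)} \|_F^2 \le \| e^{(k)} \|_F^2 - \frac{1}{\beta_A} \| A_{U_{i_k},:} e^{(k)} \|_F^2.
\end{align*}
Taking the conditional expectation over the uniform choice of $U_{i_k}$ among the $s$ blocks and using that $S$ is a partition of $[m]$, so the blocks stack to all rows of $A$ and $\sum_{i=0}^{s-1}\| A_{U_i,:} e^{(k)} \|_F^2 = \| A e^{(k)} \|_F^2$, gives
\begin{align*}
E\!\left[ \| e^{(k+1)} \|_F^2 \,\middle|\, Y^{(k)} \right] \le \| e^{(k)} \|_F^2 - \frac{1}{s\beta_A} \| A e^{(k)} \|_F^2.
\end{align*}

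The remaining ingredient is the bound $\| A e^{(k)} \|_F^2 \ge \sigma_{\min}^2(A)\| e^{(k)} \|_F^2$, which Lemma \ref{AY-norma1} supplies \emph{only} when $e^{(k)} \in \mathcal{R}(A, I)$; securing this membership is the step I expect to be the main obstacle, and it is precisely where the hypothesis $Y^{(0)} \in \mathcal{R}(A, I_{p\times q})$ is used. I would argue by induction. The limit $Y^\ast = A^\dag F = A^T (AA^T)^\dag F$ has its columns in the row space of $A$, and every increment $A^{\dag}_{U_{i_k},:}(\,\cdot\,)$ has columns in $\mathrm{range}\!\left(A^{\dag}_{U_{i_k},:}\right)$, i.e. the row space of $A_{U_{i_k},:}$, which is a subspace of the row space of $A$ because $A_{U_{i_k},:}$ consists of rows of $A$. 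Thus every $Y^{(k)}$, hence $e^{(k)}$, keeps its columns in the row space of $A$, so $e^{(k)} \in \mathcal{R}(A, I)$ and Lemma \ref{AY-norma1} applies with $\sigma_{\min}(I)=1$.

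Substituting this bound produces the one-step contraction
\begin{align*}
E\!\left[ \| e^{(k+1)} \|_F^2 \,\middle|\, Y^{(k)} \right] \le \left( 1 - \frac{\sigma_{\min}^2(A)}{s\beta_A} \right)\| e^{(k)} \|_F^2 = \widehat{\gamma}\, \| e^{(k)} \|_F^2.
\end{align*}
Taking total expectations and unrolling over $k$ yields \eqref{E_Yk}. Finally, $\widehat{\gamma} \ge 0$ is forced by the nonnegativity of the left-hand side above, and $\widehat{\gamma} < 1$ since $\sigma_{\min}(A) > 0$ on $\mathcal{R}(A,I)$; together these give the claimed linear convergence of $\{Y^{(k)}\}$ to $Y^\ast$.
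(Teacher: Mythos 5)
Your proof is correct and takes essentially the same route as the paper: the same error recursion, the projector/Pythagorean identity for the one-step error, the upper paving bound $\beta_A$ to pass to the block residual, averaging over the uniform block choice using the partition property, and Lemma \ref{AY-norma1} to close the contraction. The only difference is that you explicitly verify by induction that $Y^{(k)} - Y^\ast$ remains in $\mathcal{R}(A, I_{p\times q})$ (columns in the row space of $A$) before invoking Lemma \ref{AY-norma1} --- a hypothesis the paper uses implicitly without spelling out, so your version is, if anything, slightly more careful.
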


\begin{proof}
According to the update format of $Y^{(k+1)}$, we have
\begin{align*}
Y^{(k+1)} -Y^\ast &= Y^{(k)} - Y^\ast + A^{\dag}_{U_{i_k},:}(F_{U_{i_k},:} - A_{U_{i_k},:} Y^{(k)})\\
                  &= Y^{(k)} - Y^\ast + A^{\dag}_{U_{i_k},:}(A_{U_{i_k},:} Y^{\ast} - A_{U_{i_k},:} Y^{(k)})\\
                  %&= (Y^{(k)} - Y^\ast )+ A^{\dag}_{U_{i_k},:}A_{U_{i_k},:} (Y^{\ast} -  Y^{(k)})\\
                  &= (I_n - A^{\dag}_{U_{i_k},:}A_{U_{i_k},:})(Y^{(k)} - Y^\ast ).
\end{align*}
Based on the properties of the Moore-Penrose pseudoinverse, it follows that
\begin{align*}
||Y^{(k+1)} -Y^\ast ||_F^2 &= {\rm Tr}\left[(Y^{(k+1)} -Y^\ast )^T(Y^{(k+1)} -Y^\ast )\right]\\
                           &= {\rm Tr}\left[(Y^{(k)} - Y^\ast )^T(I_n - A^{\dag}_{U_{i_k},:}A_{U_{i_k},:})^T(I_n - A^{\dag}_{U_{i_k},:}A_{U_{i_k},:})(Y^{(k)} - Y^\ast )\right]\\
                           &= {\rm Tr}\left[(Y^{(k)} - Y^\ast )^T(I_n - A^{\dag}_{U_{i_k},:}A_{U_{i_k},:})(Y^{(k)} - Y^\ast )\right]\\
                           &= {\rm Tr}\left[(Y^{(k)} - Y^\ast )^T(Y^{(k)} - Y^\ast )\right] - {\rm Tr}\left[(Y^{(k)} - Y^\ast )^T A^{\dag}_{U_{i_k},:}A_{U_{i_k},:} (Y^{(k)} - Y^\ast )\right]\\
                           &= ||Y^{(k)} - Y^\ast ||_F^2 - {\rm Tr}\left[(Y^{(k)} - Y^\ast )^T A^{\dag}_{U_{i_k},:}A_{U_{i_k},:} A^{\dag}_{U_{i_k},:}A_{U_{i_k},:} (Y^{(k)} - Y^\ast )\right]\\
                           &= ||Y^{(k)} - Y^\ast ||_F^2 - ||A^{\dag}_{U_{i_k},:}A_{U_{i_k},:} (Y^{(k)} - Y^\ast )||_F^2.
\end{align*}
By Definition \ref{ARBK:definition_paving} and Lemma \ref{AY-norma1}, we have
\begin{align*}
||A^{\dag}_{U_{i_k},:}A_{U_{i_k},:} (Y^{(k)} - Y^\ast )||_F^2
\geq \sigma_{\min}^2(A^{\dag}_{U_{i_k},:})||A_{U_{i_k},:} (Y^{(k)} - Y^\ast )||_F^2
\geq \frac{1}{\beta_A} ||A_{U_{i_k},:} (Y^{(k)} - Y^\ast )||_F^2,
\end{align*}
where the second inequality follows from the fact that
\begin{align*}
  \sigma_{\min}^2(A^{\dag}_{U_{i_k},:}) = \sigma_{\max}^{-2}(A_{U_{i_k},:}) \geq 1/\beta_A.
\end{align*}
It implies that
\begin{align*}
E_k\left[||A^{\dag}_{U_{i_k},:}A_{U_{i_k},:} (Y^{(k)} - Y^\ast )||_F^2\right]
&\geq E\left[\frac{1}{\beta_A}||A_{U_{i_k},:} (Y^{(k)} - Y^\ast )||_F^2\right]\\
& = \frac{1}{s \beta_A} \sum_{U_{i_k} \in S} ||A_{U_{i_k},:} (Y^{(k)} - Y^\ast )||_F^2 \\
& = \frac{1}{s \beta_A} ||A(Y^{(k)} - Y^\ast )||_F^2 \\
&\geq \frac{\sigma_{\min}^2(A)}{s \beta_A}||Y^{(k)} - Y^\ast ||_F^2,
\end{align*}
where $E_k \left[ \cdot \right]$ denotes the expected value conditional on the first $k+1$ iterations for computing $Y^{(k+1)}$.  Therefore, we have
\begin{align*}
E_k\left[||Y^{(k+1)} - Y^\ast||_F^2\right]
& = ||Y^{(k)} - Y^\ast ||_F^2 - E_k\left[||A^{\dag}_{U_{i_k},:}A_{U_{i_k},:} (Y^{(k)} - Y^\ast )||_F^2\right]  \nonumber \\
&\leq ||Y^{(k)} - Y^\ast ||_F^2 - \frac{\sigma_{\min}^2(A)}{s \beta_A}||Y^{(k)} - Y^\ast ||_F^2 \nonumber \\
& = \widehat{\gamma} ||Y^{(k)} - Y^\ast ||_F^2.
\end{align*}
Since $Y^{(k+1)}$ is not only related to the previous approximation $Y^{(k)}$, but also to all the predecessors $Y^{(0)}, Y^{(1)},\cdots, Y^{(k)}$, taking full expectations instead of conditional expectations on both sides of the above formula yields
\begin{align*}
  E \left[||Y^{(k+1)} - Y^\ast||_F^2\right]  \leq  \widehat{\gamma}  E \left[ ||Y^{(k)} - Y^\ast ||_F^2 \right].
\end{align*}
By induction on the iteration index $k$, we straightforwardly obtain the estimate (\ref{E_Yk}). This completes the proof.
\hfill
\end{proof}

Now we turn to analyze the convergence property of the ARBK method.

\vskip 0.5ex
\begin{theorem}\label{Thm:ARBK}
Let the matrix equations in \eqref{MatrixAXB=F} be consistent,  and $S$ and $T$ be the row and column random partitions of $[m]$ and $[q]$ defined by Definitions \ref{ARBK:definition-RP} and \ref{ARBK:definition-RP2}, respectively. Starting from the initial matrices
 $X^{(0)}\in \mathcal{R}(I_{m\times n}, B)$ and $Y^{(0)}\in \mathcal{R}(A, I_{p\times q})$, the sequence $\left\{X^{(k)}\right\}_{k=0}^{\infty}$ generated by Algorithm \ref{alg:1} converges linearly to the solution $X^{\ast}$ in mean square form. Moreover, the solution error in expectation for the iteration sequence obeys
\begin{equation}\label{E_Xk}
E\left[||X^{(k+1)} - X^\ast||_F^2\right] \leq \overline{\gamma}_k \widetilde{\gamma}^{k+1} ||X^{(0)} - X^\ast ||_F^2,
\end{equation}
where the convergence factor $\widetilde{\gamma} = 1 -  \sigma_{\min}^2(B) / (t \beta_B)$ and the parameter
\begin{align*}
\overline{\gamma}_k = 1 + \frac{\sigma_{\max}^2(B)}{t \alpha_B} \sum\limits_{\ell=0}^{k}\widehat{\gamma}^{\ell+1}/\widetilde{\gamma}^{\ell+1}
\end{align*}
with $\widehat{\gamma}$ being defined by Lemma \ref{AY-norma2}.
\end{theorem}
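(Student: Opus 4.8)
The plan is to mirror the single-system argument of Lemma \ref{AY-norma2}, but now tracking how the residual error of the $X$-subproblem is contaminated by the (decaying) error of the $Y$-subproblem. First I would record the algebraic identity linking the two limit points, namely $Y^\ast = X^\ast B$: since the equation is consistent, the rows of $Y^\ast = A^\dag F$ lie in the column space of $B$, so $X^\ast B = A^\dag F B^\dag B = A^\dag F = Y^\ast$. Using this together with $Y^\ast_{:,V_{j_k}} = X^\ast B_{:,V_{j_k}}$, I would rewrite the $X$-update as
\begin{align*}
X^{(k+1)} - X^\ast = (X^{(k)} - X^\ast)\left(I_p - B_{:,V_{j_k}} B^\dag_{:,V_{j_k}}\right) + \left(Y^{(k+1)} - Y^\ast\right)_{:,V_{j_k}} B^\dag_{:,V_{j_k}},
\end{align*}
which splits the new error into a projected old $X$-error plus a term driven purely by the current $Y$-error.

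The key step is to expand $\|X^{(k+1)}-X^\ast\|_F^2$ and observe that the cross term vanishes: the rows of the first summand lie in the orthogonal complement of the column space of $B_{:,V_{j_k}}$ (because $(I_p - B_{:,V_{j_k}}B^\dag_{:,V_{j_k}})B_{:,V_{j_k}} = 0$), whereas the rows of the second summand lie in that same column space (being right multiples of $B^\dag_{:,V_{j_k}}$). Hence $\|X^{(k+1)}-X^\ast\|_F^2$ is exactly the sum of the two squared Frobenius norms. For the first, the projection identity, the paving bound $\sigma_{\max}^2(B_{:,V_{j_k}}) \le \beta_B$, averaging $V_{j_k}$ uniformly over the partition $T$ via $\sum_{V_j \in T}\|(X^{(k)}-X^\ast) B_{:,V_j}\|_F^2 = \|(X^{(k)}-X^\ast) B\|_F^2$, and Lemma \ref{AY-norma1} (applied with $A=I$) together yield a contraction by $\widetilde\gamma = 1 - \sigma_{\min}^2(B)/(t\beta_B)$. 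For the second, the paving bound $\sigma_{\min}^2(B_{:,V_{j_k}}) \ge \alpha_B$ gives $\sigma_{\max}^2(B^\dag_{:,V_{j_k}}) \le 1/\alpha_B$, and averaging produces the term $\frac{1}{t\alpha_B}\|Y^{(k+1)}-Y^\ast\|_F^2$. Along the way I would check by induction, starting from the stated initial conditions, that the iterates stay in $\mathcal{R}(I_{m\times n}, B)$ and $\mathcal{R}(A, I_{p\times q})$, so that Lemma \ref{AY-norma1} is applicable; this holds because each correction term contributes rows (resp. columns) already lying in the required space.

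Taking the conditional and then the full expectation delivers the coupled recursion
\begin{align*}
E\left[\|X^{(k+1)}-X^\ast\|_F^2\right] \le \widetilde\gamma\, E\left[\|X^{(k)}-X^\ast\|_F^2\right] + \frac{1}{t\alpha_B}\, E\left[\|Y^{(k+1)}-Y^\ast\|_F^2\right].
\end{align*}
I would then substitute the $Y$-bound from Lemma \ref{AY-norma2}, namely $E[\|Y^{(k+1)}-Y^\ast\|_F^2] \le \widehat\gamma^{k+1}\|Y^{(0)}-Y^\ast\|_F^2$, and use $Y^{(0)}-Y^\ast = (X^{(0)}-X^\ast)B$ with $\|(X^{(0)}-X^\ast)B\|_F^2 \le \sigma_{\max}^2(B)\|X^{(0)}-X^\ast\|_F^2$ to express the forcing term through $\|X^{(0)}-X^\ast\|_F^2$. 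This reduces the claim to solving the scalar inhomogeneous recurrence $a_{k+1} \le \widetilde\gamma\, a_k + c\,\widehat\gamma^{k+1}$; introducing $b_k = a_k/\widetilde\gamma^{k}$ telescopes it, and resubstituting reproduces exactly the factor $\overline\gamma_k \widetilde\gamma^{k+1}$, with the geometric sum $\sum_{\ell=0}^k \widehat\gamma^{\ell+1}/\widetilde\gamma^{\ell+1}$.

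The main obstacle I anticipate is the coupling between the two subproblems: unlike the clean single-system case, the $X$-error does not contract on its own but is perturbed at every step by the residual $Y$-error, so the heart of the argument is the orthogonal, cross-term-free decomposition that cleanly separates a genuine $\widetilde\gamma$-contraction from a perturbation whose magnitude is controlled by Lemma \ref{AY-norma2}, followed by the bookkeeping needed to solve the resulting inhomogeneous recurrence.
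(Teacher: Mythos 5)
Your proposal follows essentially the same route as the paper's proof: the same error decomposition via $Y^\ast = X^\ast B$, the same orthogonality argument killing the cross term, the same paving bounds $\beta_B$ and $\alpha_B$ combined with uniform averaging over $T$ and Lemma \ref{AY-norma1}, substitution of the bound from Lemma \ref{AY-norma2}, and the same unrolling of the inhomogeneous recurrence to produce $\overline{\gamma}_k\widetilde{\gamma}^{k+1}$. If anything, your version is slightly more careful than the paper's, since you explicitly note the induction keeping the iterates in $\mathcal{R}(I_{m\times n},B)$ and $\mathcal{R}(A,I_{p\times q})$ (needed for Lemma \ref{AY-norma1}) and you use the correct projector dimension $I_p$ where the paper writes $I_n$.
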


\begin{proof}
According to iterative scheme on $X^{(k)}$ for $k=0,1,2,\cdots$, we have
\begin{align*}
X^{(k+1)} - X^{\ast} &= X^{(k)} - X^{\ast} + (Y^{(k+1)}_{:,V_{j_k}} - X^{(k)} B_{:,V_{j_k}}) B^{\dag}_{:,V_{j_k}}\\
                     &= X^{(k)} - X^{\ast} + (Y^{(k+1)}_{:,V_{j_k}} - Y^{\ast}_{:,V_{j_k}} + Y^{\ast}_{:,V_{j_k}}- X^{(k)} B_{:,V_{j_k}}) B^{\dag}_{:,V_{j_k}}\\
                     &= X^{(k)} - X^{\ast} - (X^{(k)} B_{:,V_{j_k}} - Y^{\ast}_{:,V_{j_k}}) B^{\dag}_{:,V_{j_k}} + (Y^{(k+1)}_{:,V_{j_k}} - Y^{\ast}_{:,V_{j_k}}) B^{\dag}_{:,V_{j_k}}.
\end{align*}
Since the matrix equation \eqref{AXB=F}  is consistent, it follows that $Y^{\ast}_{:,V_{j_k}} = X^{\ast}B_{:,V_{j_k}}$ by using $Y^{\ast} = X^{\ast}B$ and the first two terms of the previous expression can be further rewritten by
\begin{align*}
 X^{(k)} - X^{\ast} - (X^{(k)} B_{:,V_{j_k}} - Y^{\ast}_{:,V_{j_k}}) B^{\dag}_{:,V_{j_k}} = (X^{(k)} - X^{\ast})(I_{n} - B_{:,V_{j_k}}B^{\dag}_{:,V_{j_k}}).
\end{align*}
After an elementary calculation, it is easy to obtain that
\begin{align*}
||X^{(k+1)} - X^{\ast}||_F^2 &= ||(X^{(k)} - X^{\ast})(I_{n} - B_{:,V_{j_k}}B^{\dag}_{:,V_{j_k}})||_F^2 + ||(Y^{(k+1)}_{:,V_{j_k}} - Y^{\ast}_{:,V_{j_k}}) B^{\dag}_{:,V_{j_k}}||_F^2
\end{align*}
which comes from the fact that
\begin{align*}
& \quad \left\langle(X^{(k)} - X^{\ast})(I_{n} - B_{:,V_{j_k}}B^{\dag}_{:,V_{j_k}}),(Y^{(k+1)}_{:,V_{j_k}} - Y^{\ast}_{:,V_{j_k}}) B^{\dag}_{:,V_{j_k}}\right\rangle_F \\
&= {\rm Tr}\left\{(I_{n} - B_{:,V_{j_k}}B^{\dag}_{:,V_{j_k}})^T(X^{(k)} - X^{\ast})^T(Y^{(k+1)}_{:,V_{j_k}} - Y^{\ast}_{:,V_{j_k}}) B^{\dag}_{:,V_{j_k}}\right\} \\
&= {\rm Tr}\left\{B^{\dag}_{:,V_{j_k}}(I_{n} - B_{:,V_{j_k}}B^{\dag}_{:,V_{j_k}})(X^{(k)} - X^{\ast})^T(Y^{(k+1)}_{:,V_{j_k}} - Y^{\ast}_{:,V_{j_k}})\right\} \\
&= {\rm Tr}\left\{B^{\dag}_{:,V_{j_k}} - B^{\dag}_{:,V_{j_k}}B_{:,V_{j_k}}B^{\dag}_{:,V_{j_k}})(X^{(k)} - X^{\ast})^T(Y^{(k+1)}_{:,V_{j_k}} - Y^{\ast}_{:,V_{j_k}})\right\} \\
&= 0
\end{align*}
by using the properties of pseudoinverse. Then, we have
\begin{align}\label{eq:ARBK_error1}
||X^{(k+1)} - X^{\ast}||_F^2
&= ||(X^{(k)} - X^{\ast})(I_{n} - B_{:,V_{j_k}}B^{\dag}_{:,V_{j_k}})||_F^2 + ||(Y^{(k+1)}_{:,V_{j_k}} - Y^{\ast}_{:,V_{j_k}}) B^{\dag}_{:,V_{j_k}}||_F^2 \notag \\
&= ||X^{(k)} - X^{\ast}||_F^2 - ||(X^{(k)} - X^{\ast})B_{:,V_{j_k}}B^{\dag}_{:,V_{j_k}}||_F^2 + ||(Y^{(k+1)}_{:,V_{j_k}} - Y^{\ast}_{:,V_{j_k}}) B^{\dag}_{:,V_{j_k}}||_F^2.
\end{align}

Let $E_k$ denote the expected value conditional on the first $k+1$ iterations in Algorithm \ref{alg:1}, that is
\begin{align*}
  E_k\left[\cdot\right] = E\left[ \cdot~ |~ U_{i_0}, V_{j_0},U_{i_1}, V_{j_1},\cdots,U_{i_k}, V_{j_k}\right],
\end{align*}
where $i_\ell$ and $j_\ell$ are independent of each other and selected uniformly at the $\ell$th iteration for $\ell=0,1,\cdots,k$. Let
the conditional expectations with respect to $U_{i_\ell}$ be
\begin{align*}
  E_{k,1}\left[\cdot\right] = E\left[ \cdot~ |~ U_{i_0}, V_{j_0},U_{i_1}, V_{j_1},\cdots,U_{i_{k-1}}, V_{j_{k-1}},V_{j_k}\right]
\end{align*}
and with respect to $V_{j_\ell}$ be
\begin{align*}
  E_{k,2}\left[\cdot\right] = E\left[ \cdot~ |~ U_{i_0}, V_{j_0},U_{i_1}, V_{j_1},\cdots,U_{i_{k-1}}, V_{j_{k-1}},U_{i_k}\right].
\end{align*}
According to the law of total expectation, we have
\begin{align*}
  E_k\left[\cdot\right] = E_{k,1}\left[  E_{k,2}\left[\cdot\right]  \right] = E_{k,2}\left[  E_{k,1}\left[\cdot\right]  \right].
\end{align*}
Then, the expectation of the second term in formula \eqref{eq:ARBK_error1} can be computed by
\begin{align*}
E_{k}\left[||(X^{(k)} - X^{\ast})B_{:,V_{j_k}}B^{\dag}_{:,V_{j_k}}||_F^2\right]
& = E_{k,1}\left[ E_{k,2}\left[||(X^{(k)} - X^{\ast})B_{:,V_{j_k}}B^{\dag}_{:,V_{j_k}}||_F^2\right]\right]\\
& \geq E_{k,1}\left[\frac{1}{\beta_B} E_{k,2}\left[||(X^{(k)} - X^{\ast})B_{:,V_{j_k}}||_F^2\right]\right]\\
&= \frac{1}{t \beta_B}E_{k,1}\left[ \sum\limits_{ V_{j_k} \in T }||(X^{(k)} - X^{\ast})B_{:,V_{j_k}}||_F^2\right]\\
&= \frac{1}{t \beta_B} E_{k,1}\left[ ||(X^{(k)} - X^{\ast})B||_F^2\right]\\
&\geq \frac{\sigma_{\min}^2(B)}{t \beta_B} E_{k,1}\left[ ||X^{(k)} - X^\ast ||_F^2\right],
\end{align*}
where the first inequality follows from the fact that
\begin{align*}
||(X^{(k)} - X^{\ast})B_{:,V_{j_k}}B^{\dag}_{:,V_{j_k}}||_F^2 &\geq \sigma_{\min}^2(B^{\dag}_{:,V_{j_k}})||(X^{(k)} - X^{\ast})B_{:,V_{j_k}}||_F^2\\
&= \frac{1}{\sigma_{\max}^2(B_{:,V_{j_k}})}||(X^{(k)} - X^{\ast})B_{:,V_{j_k}}||_F^2\\
&\geq \frac{1}{\beta_B}||(X^{(k)} - X^{\ast})B_{:,V_{j_k}}||_F^2
\end{align*}
and the last inequality is from Definition \ref{ARBK:definition_pavingB}. By taking the expectation of both sides, we have
\begin{align}\label{ARBK:ThmEq1}
  E\left[||(X^{(k)} - X^{\ast})B_{:,V_{j_k}}B^{\dag}_{:,V_{j_k}}||_F^2\right]
  \geq \frac{\sigma_{\min}^2(B)}{t \beta_B} E \left[ ||X^{(k)} - X^\ast ||_F^2\right].
\end{align}

Similarly, the expectation of the third term in formula \eqref{eq:ARBK_error1} satisfies that
\begin{align*}
E_{k}\left[||(Y^{(k+1)}_{:,V_{j_k}} - Y^{\ast}_{:,V_{j_k}}) B^{\dag}_{:,V_{j_k}}||_F^2\right]
& = E_{k,1}\left[ E_{k,2}\left[||(Y^{(k+1)}_{:,V_{j_k}} - Y^{\ast}_{:,V_{j_k}}) B^{\dag}_{:,V_{j_k}}||_F^2\right] \right]\\
&\leq E_{k,1}\left[ \frac{1}{\alpha_B }E_{k,2}\left[||Y^{(k+1)}_{:,V_{j_k}} - Y^{\ast}_{:,V_{j_k}}||_F^2\right]\right]\\
&= \frac{1}{t \alpha_B} E_{k,1}\left[ ||Y^{(k+1)} - Y^{\ast}||_F^2\right].
\end{align*}
where the first inequality comes from the fact that
\begin{align*}
||(Y^{(k+1)}_{:,V_{j_k}} - Y^{\ast}_{:,V_{j_k}}) B^{\dag}_{:,V_{j_k}}||_F^2 &\leq \sigma_{\max}^2(B^{\dag}_{:,V_{j_k}})||Y^{(k+1)}_{:,V_{j_k}} - Y^{\ast}_{:,V_{j_k}}||_F^2\\
&= \frac{1}{\sigma_{\min}^2(B_{:,V_{j_k}})}||Y^{(k+1)}_{:,V_{j_k}} - Y^{\ast}_{:,V_{j_k}}||_F^2\\
&\leq \frac{1}{\alpha_B} ||Y^{(k+1)}_{:,V_{j_k}} - Y^{\ast}_{:,V_{j_k}}||_F^2.
\end{align*}
By taking the expectation of both sides again, we have
\begin{align}\label{ARBK:ThmEq2}
E\left[||(Y^{(k+1)}_{:,V_{j_k}} - Y^{\ast}_{:,V_{j_k}}) B^{\dag}_{:,V_{j_k}}||_F^2\right]
\leq \frac{1}{t \alpha_B}  E \left[ ||Y^{(k+1)} - Y^{\ast}||_F^2\right]
\leq \frac{1}{t \alpha_B} \widehat{\gamma}^{k+1} ||Y^{(0)} - Y^\ast||_F^2,
\end{align}
where the last inequality is from Lemma  \ref{AY-norma2}.

Based on formulas \eqref{ARBK:ThmEq1} and \eqref{ARBK:ThmEq2}, the expectation of $||X^{(k+1)} - X^{\ast}||_F^2$ is bounded by
 \begin{align}\label{EXk}
E\left[||X^{(k+1)} - X^{\ast}||_F^2\right]
&\leq \left( 1 - \frac{\sigma_{\min}^2(B)}{t \beta_B} \right) E\left[||X^{(k)} - X^\ast ||_F^2\right]
+ \frac{1}{t \alpha_B} \widehat{\gamma}^{k+1} ||Y^{(0)} - Y^\ast||_F^2 \nonumber  \\
& = \widetilde{\gamma} E\left[||X^{(k)} - X^\ast ||_F^2\right]
+ \frac{\widehat{\gamma}^{k+1}}{t \alpha_B}  ||Y^{(0)} - Y^\ast||_F^2 \nonumber  \\
&\leq \widetilde{\gamma}
\left(\widetilde{\gamma} E\left[||X^{(k-1)} - X^\ast ||_F^2\right] + \frac{ \widehat{\gamma}^{k}}{t \alpha_B}||Y^{(0)} - Y^\ast ||_F^2\right)
+ \frac{ \widehat{\gamma}^{k+1}}{t \alpha_B}||Y^{(0)} - Y^\ast ||_F^2 \nonumber \\
&\leq \cdots \nonumber \\
&\leq \widetilde{\gamma}^{k+1} ||X^{(0)} - X^\ast ||_F^2 + \frac{1}{t \alpha_B }\sum\limits_{\ell=0}^{k}\widehat{\gamma}^{\ell+1}\widetilde{\gamma}^{k-\ell}||Y^{(0)} - Y^\ast ||_F^2.
\end{align}
By using $Y^{\ast} = X^{\ast}B$, we have
\begin{align}\label{X-Y}
||Y^{(0)} - Y^{\ast}||_F^2 = ||(X^{(0)} - X^{\ast})B||_F^2 \leq \sigma_{\max}^2(B)||X^{(0)} - X^{\ast}||_F^2.
\end{align}
Consequently, it follows that
\begin{align*}
E\left[||X^{(k+1)} - X^{\ast}||_F^2\right] &\leq \widetilde{\gamma}^{k+1} ||X^{(0)} - X^\ast ||_F^2 + \frac{\sigma_{\max}^2(B)}{t \alpha_B}\sum\limits_{\ell=0}^{k}\widehat{\gamma}^{\ell+1}\widetilde{\gamma}^{k-\ell}||X^{(0)} - X^{\ast}||_F^2 \nonumber\\
&= \left(1 + \frac{\sigma_{\max}^2(B)}{t \alpha_B} \sum\limits_{\ell=0}^{k}\widehat{\gamma}^{\ell+1}/\widetilde{\gamma}^{\ell+1}\right)\widetilde{\gamma}^{k+1}||X^{(0)} - X^\ast ||_F^2
\end{align*}
by substituting \eqref{X-Y} into \eqref{EXk}. This completes the proof.
\hfill
\end{proof}

\vskip 0.5ex
\begin{remark}
We notice that the criterion in CME-RK \cite{23XBL} is equivalent to a uniform sampling if the matrices $A$ and $B$ are proactively scaled with two diagonal matrices that normalize the Euclidean norms of all of its rows and columns to be a same constant, respectively.
%That is,
%\begin{align*}
%    P\left(U_{i_k} =  i_k \right) =  \frac{1}{m}~~and~~
%    P\left(V_{j_k} =  j_k \right) =  \frac{1}{q}
%  \end{align*}
%  for $i_k \in [m]$ and  $j_k \in [p]$.
The convergence result in Remark \ref{ARBK:remark_CMERKcon} for the CME-RK method reduces to
\begin{align*}
E\left[||X^{(k+1)} - X^\ast||_F^2\right]
& \leq
\left( 1 + \frac{\sigma_{\max}^2(B)}{m}  \sum\limits_{\ell=0}^{k} \widehat{\rho}_1^{\ell+1}/\widehat{\rho}_2^{\ell+1} \right) \widehat{\rho}_2^{k+1}
||X^{(0)} - X^\ast ||_F^2 \\
& = \widehat{\rho}_2^{k+1} ||X^{(0)} - X^\ast ||_F^2 + \frac{\sigma_{\max}^2(B)}{m}\sum\limits_{\ell=0}^{k}\widehat{\rho}_1^{\ell+1}\widehat{\rho}_2^{k-\ell}||X^{(0)} - X^{\ast}||_F^2,
\end{align*}
where the convergence factors $\widehat{\rho}_1 = 1 -  \sigma_{\min}^2(A)/m$ and $\widehat{\rho}_2 = 1 -  \sigma_{\min}^2(B)/q$. It implies that the block technique will further accelerate the convergence of the CME-RK method if the ARBK method has the well-defined paving. For example, $\beta_A \leq m/s$ and $\beta_B \leq q/t$. These two hypotheses are easily obtained by setting $\beta_A = \max\limits_{U\in S} \left\{ \| A_{U,:} \|_F^2 \right\} =|U|$ and $\beta_B = \max\limits_{V\in T} \left\{ \| B_{:,V} \|_F^2 \right\}=|V|$.
\end{remark}

\begin{remark}
For $k=0,1,2,\cdots$, the solution error sequence in expectation for the GRBK iteration admits
\begin{align*}
E\left[||X^{(k+1)} - X^\ast||_F^2\right] \leq
\left(
1 - \frac{\sigma_{\min}^2(A)}{\| A\|^2_F \beta_{\max}^2(A)}
    \frac{\sigma_{\min}^2(B)}{\| B\|^2_F \beta_{\max}^2(B)}
\right)^{k+1}
 ||X^{(0)} - X^\ast ||_F^2,
\end{align*}
where
\begin{align*}
  \beta_{\max}(A) = \max\limits_{U\in S} \left\{ \frac{\sigma_{\max}(A_{U,:})}{\| A_{U,:}\|_F} \right\}
  ~~ and ~~
  \beta_{\max}(B) = \max\limits_{V\in T} \left\{ \frac{\sigma_{\max}(B_{:,V})}{\| B_{:,V}\|_F} \right\};
\end{align*}
see also Equations (2.9) and (2.10) in \cite{22Niu}. Let every block in $S$ and $T$ have equal size denoted by $|U|$ and $|V|$, respectively. By normalizing the Euclidean norms of $\left\{A_{1,:}, A_{2,:},\cdots,A_{m,:}\right\}$ and $\left\{B_{:,1}, B_{:,2},\cdots,B_{:,q}\right\}$, it follows that
\begin{align*}
E\left[||X^{(k+1)} - X^\ast||_F^2\right] \leq
\left(
1 - \frac{\sigma_{\min}^2(A)}{ s\beta_A }
    \frac{\sigma_{\min}^2(B)}{ t\beta_B }
\right)^{k+1}
 ||X^{(0)} - X^\ast ||_F^2
\end{align*}
according to $s=m/|U|$, $t=q/|V|$, $\| A_{U,:}\|^2_F = |U|$, and $\| B_{:,V}\|^2_F = |V|$ for $U\in S$ and $V\in T$, respectively. Since the convergence rate of the ARBK method is mainly determined by $\widetilde{\gamma}$, we omit the term $\overline{\gamma}_k$ in \eqref{E_Xk} to simplify the analysis process. It follows that
\begin{align*}
  \widetilde{\gamma} < 1 - \frac{\sigma_{\min}^2(A)}{ s\beta_A } \frac{\sigma_{\min}^2(B)}{ t\beta_B }
\end{align*}
from the fact that
\begin{align*}
  0<
  \frac{\sigma_{\min}^2(A)}{ s\beta_A }
   = \frac{\sigma_{\min}^2(A)}{m/|U|}\frac{1}{\max\limits_{U\in S}\left\{ \sigma_{\max}^2(A_{U,:}) \right\}}
   =\frac{||A_{U,:} ||_F^2}{||A ||_F^2} \frac{\sigma_{\min}^2(A)}{\sigma_{\max}^2(A_{U,:})} <1,
\end{align*}
which indicates that the ARBK method will have a faster convergence factor than the GRBK method \cite{22Niu}. Later, the acceleration will become apparent in Section \ref{AXB=F:Numer}.
\end{remark}

\section{Numerical Experiments}\label{AXB=F:Numer}
In this section, we present some numerical experiments to verify the effectiveness of our algorithm performed by using MATLAB (version R2021a) on a notebook PC with Intel(R) Core(TM) i5-10210U CPU 1.60GHz-2.10GHz.

In all examples, the coefficient matrices $A$ and $B$ are given, the exact solution is generated by using the MATLAB built-in function, e.g., $X^{\ast} = {\sf ones}(n,p)$, and the right-hand side matrix is given by $F=AX^{\ast}B$. We compare the performance of the tested methods in terms of iteration number (denoted as IT), computing time in seconds (denoted as CPU), and relative solution error (denoted by RSE), where the last term is defined by ${\rm RSE}= ||X^{(k)} - X^{\ast}||_F/||X^{\ast}||_F$ with the iteration starting from the zero matrix. The CPU and IT are the arithmetical averages of the elapsed CPU times and the required iteration steps concerning $20$ times repeated runs of the corresponding method. The Moore-Penrose pseudoinverse in the block iterative methods is generated by the MATLAB built-in function {\sf pinv}. Every row and column index sets are defined by Definitions \eqref{ARBK:definition-RP} and \eqref{ARBK:definition-RP2}, respectively.

For the first example, we demonstrate the computational behavior of the ARBK method with respect to different block-size. The coefficient matrix is the synthetic data generated as follows.

\vskip 1.25ex
\begin{example}\label{ARBK:example1}
For given $m$, $n$, $p$ and $q$, we consider two dense coefficient matrices, which are randomly generated by the MALTAB built-in function, e.g., $A = {\sf randn}(m,n)$ and $B = {\sf randn}(p,q)$.
\end{example}
\vskip 1.25ex

This example gives us many flexibilities to adjust the input parameters. We consider four cases by setting $m=q=350,450,550,650$ and $n=p=210,270,330,390$ and try to find the appropriate block sizes for the ARBK method. Seven pairs of block-sizes $(\tau_A, \tau_B)=(m/s, q/t)$ are considered, i.e., $(\tau_A, \tau_B)=(10,10)$, $(20,20)$, $(30,30)$, $(40, 40)$, $(50,50)$, $(60,60)$, and $(80,80)$. Correspondingly, the ARBK method is denoted by ARBK($\tau_A$, $\tau_B$).

The average convergence behaviors of relative solution error versus the number of iteration step and computing time given by ARBK($\tau_A$, $\tau_B$) are shown in Figure \ref{fig:VariousBlockSizeEx1}. In this figure, the heavy line represents the median performance and the shaded region spans the minimum to the maximum value across all trials. It is observed that the convergence of the ARBK method becomes faster with an increase in block-size and then slows down after reaching the fastest rate in terms of computing time. The results show that the block-size pairs $(\tau_A, \tau_B)=(30,30)$ and $(50,50)$ are the favorable choices for the ARBK method leading to a satisfactory convergence. Not especially specified, we will adopt these two parameter  selection approaches for the ARBK method in the following numerical test.

\begin{figure}[!htb]
\centering
    \subfigure[$m=q=350$, $n=p=210$]{
		\includegraphics[width=0.5\textwidth]{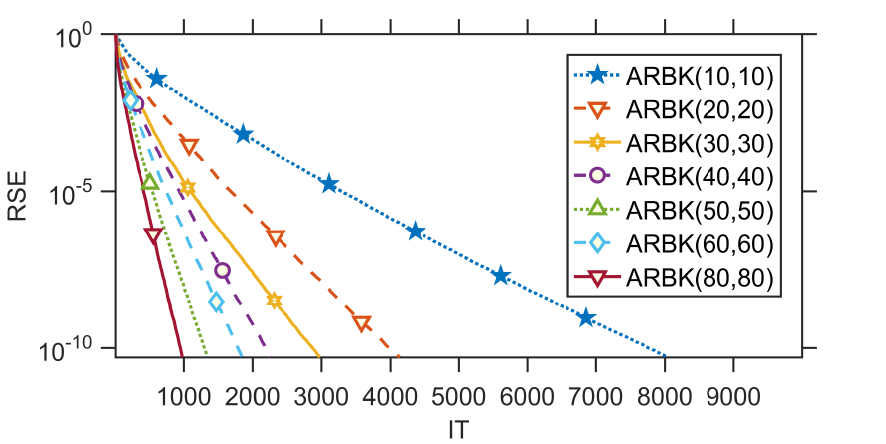}}
    \hspace{-0.7cm}
    \subfigure[$m=q=350$, $n=p=210$]{
		\includegraphics[width=0.5\textwidth]{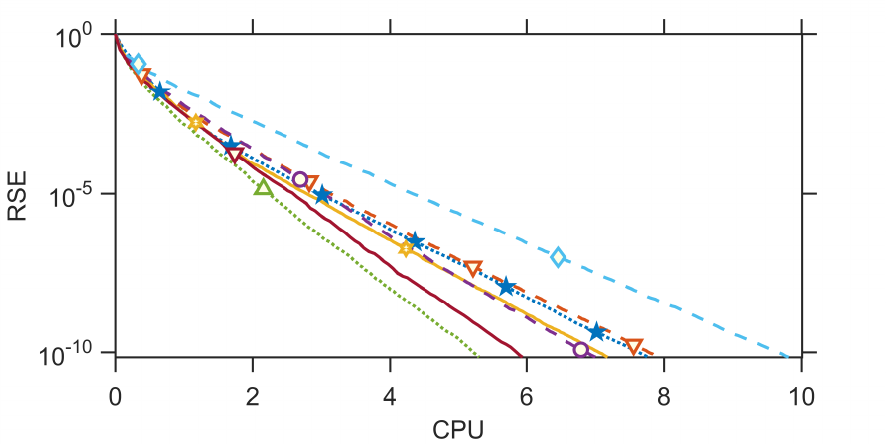}}
    \hspace{2cm}
    \subfigure[$m=q=450$, $n=p=270$]{
		\includegraphics[width=0.5\textwidth]{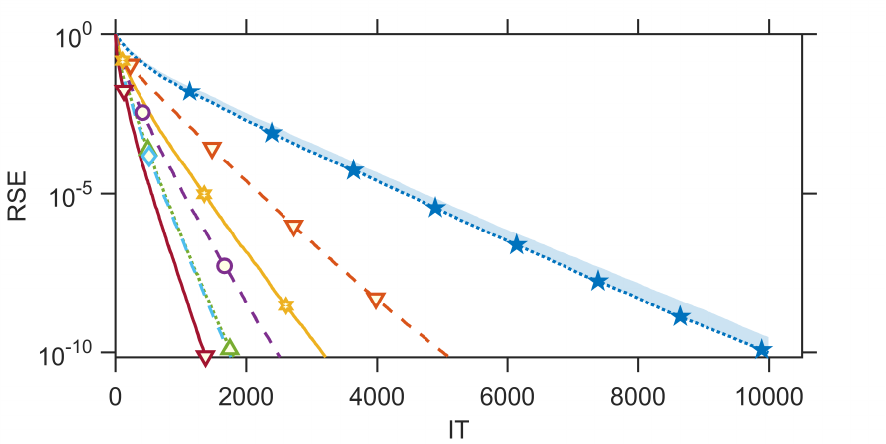}}
    \hspace{-0.7cm}
    \subfigure[$m=q=450$, $n=p=270$]{
		\includegraphics[width=0.5\textwidth]{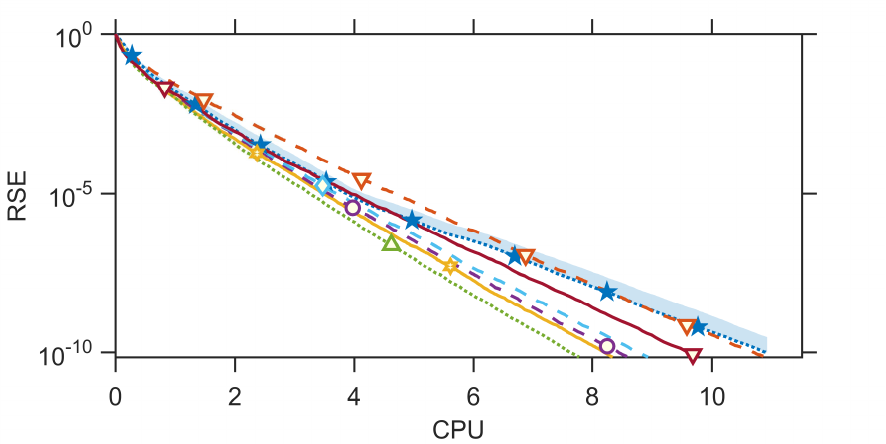}}
    \hspace{2cm}
    \subfigure[$m=q=550$, $n=p=330$]{
		\includegraphics[width=0.5\textwidth]{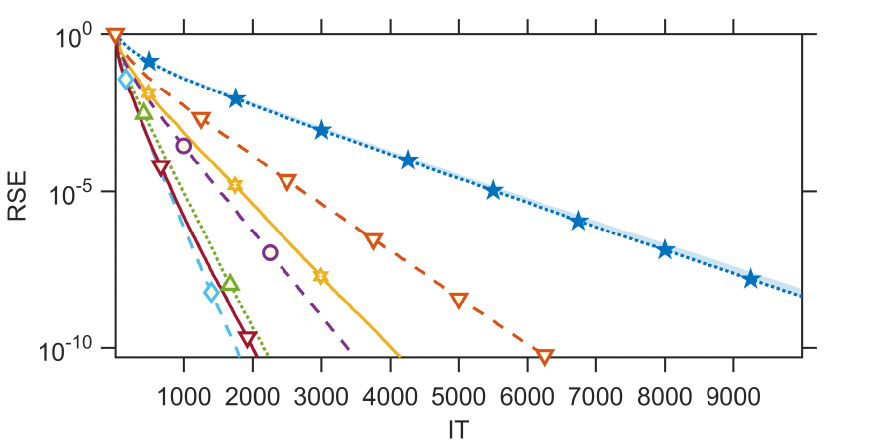}}
    \hspace{-0.7cm}
    \subfigure[$m=q=550$, $n=p=330$]{
		\includegraphics[width=0.5\textwidth]{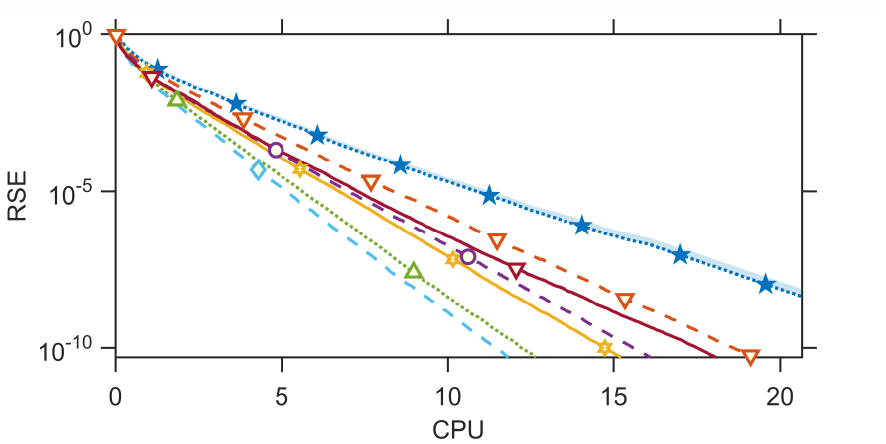}}
    \hspace{2cm}
    \subfigure[$m=q=650$, $n=p=390$]{
		\includegraphics[width=0.5\textwidth]{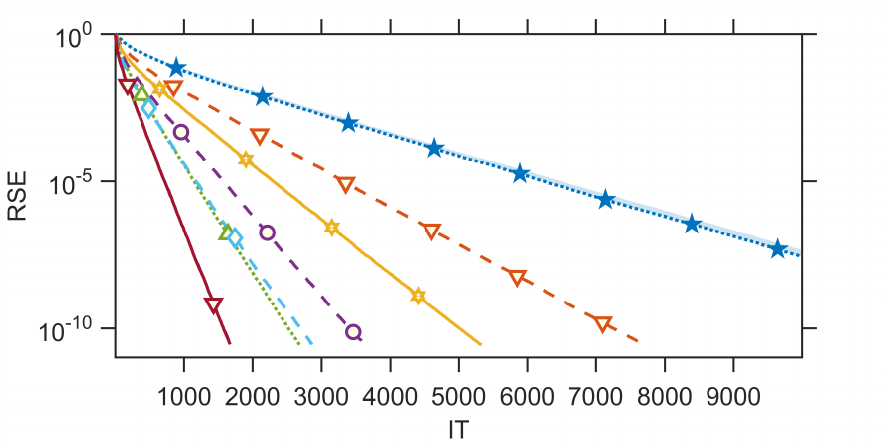}}
    \hspace{-0.7cm}
    \subfigure[$m=q=650$, $n=p=390$]{
		\includegraphics[width=0.5\textwidth]{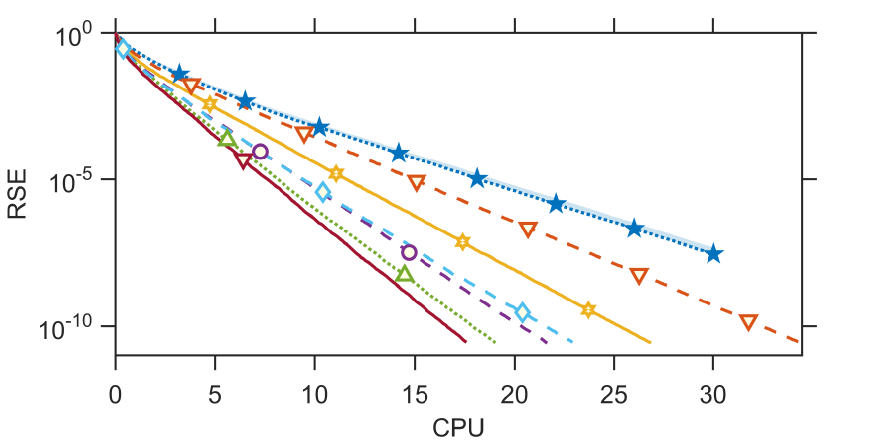}}
\caption{The convergence behaviors of RSE versus IT and CPU given by the ARBK method for Example \ref{ARBK:example1} with various sizes of the coefficient matrices.}
\label{fig:VariousBlockSizeEx1}
\end{figure}

\vskip 1.25ex
\begin{example}\label{ARBK:example2}
Like  \cite{Du19}, Du constructed a dense  matrix $R$  by $R = U\Sigma V^T\in \Rc^{x_1\times x_2}$,
where the entries of $U\in \Rc^{x_1\times r}$ and $V\in \Rc^{x_2\times r}$ are generated from a standard normal distribution,
and then their columns are orthonormalized. The matrix $\Sigma$ is an $r$-by-$r$ diagonal matrix whose first $r$-2 diagonal entries are uniformly distributed numbers in $[\sigma_2, \sigma_1]$, and the last two diagonal entries are $\sigma_2$ and $\sigma_1$. The output matrix is abbreviated as $ R =   {\sf{Smatrix}} (x_1, x_2,r,\sigma_1,\sigma_2)$.
\end{example}
\vskip 1.25ex

For the second example, we compare the performances of the ARBK, CME-RK \cite{23XBL}, and GRBK \cite{22Niu} methods, which are all row-action methods. We conduct our evaluation on a synthetic data set customized to yield different specific instances by manipulating the input parameters. Four kinds of matrix equations are considered. The size (i.e., $m$, $n$, $p$, and $q$), rank (i.e., $r_{A}$ and $r_{B}$), and Euclidean condition number (i.e., $\sigma_{A_1}/\sigma_{A_2}$ and  $\sigma_{B_1}/\sigma_{B_2}$) of the test matrices are given in Table \ref{ARBK:tab:ex2-Parameters}. The experiments are terminated once the relative solution error is less than $5\times 10^{-2}$ or the number of iteration step exceeds $10^5$.

We show the numerical results of relative solution error, iteration number, and computing time in Table \ref{ARBK:tab:ex2-NumericalResults}. For the block iterative methods, i.e., ARBK and GRBK \cite{22Niu}, the block sizes are set to $(30,30)$ and $(50,50)$.  It can be seen that both ARBK($30$, $30$) and ARBK($50$, $50$) successfully compute an approximate solution for all cases, but the CME-RK, GRBK($30$, $30$), and GRBK($50$, $50$) methods fail for the Cases (I, II, IV), (II, III), and III, respectively,  due to the number of iteration steps exceeds $10^{5}$. For all convergent cases, the iteration counts and computing times of ARBK($30$, $30$) and ARBK($50$, $50$) are appreciably smaller than those of CME-RK,  GRBK($30$, $30$), and GRBK($50$, $50$). Hence, the ARBK method outperforms the CME-RK and GRBK methods in terms of iteration counts as well as computing times.

\begin{table}[!ht]
 \normalsize
\caption{The parameters used in $A =  {\sf{Smatrix}} (m, n,r_{A},\sigma_{A_1},\sigma_{A_2})$ and $B =  {\sf{Smatrix}} (p, q,r_{B},\sigma_{B_1},\sigma_{B_2})$ for Example \ref{ARBK:example2}.}
\centering
\begin{tabular}{|c| ccccc| ccccc|}
\cline{1-11}
Cases&$m$&$n$&$r_{A}$&$\sigma_{A_1}$&$\sigma_{A_2}$&$p$&$q$&$r_{B}$&$\sigma_{B_1}$&$\sigma_{B_2}$\\
\cline{1-11}
Case I   & $1000$ &$100$&$100$&$10$&$0.1$ &$1000$ &$100$&$100$&$10$&$0.1$\\
Case II  & $2000$ &$100$&$100$&$10$&$0.1$ &$2000$ &$100$&$100$&$10$&$0.1$\\
Case III & $4000$&$200$&$200$&$10$&$0.1$ &$4000$&$200$&$200$&$10$&$0.1$\\
Case IV & $10000$&$200$&$200$&$10$&$0.1$ &$10000$&$200$&$200$&$10$&$0.1$\\
\cline{1-11}
\end{tabular}
\label{ARBK:tab:ex2-Parameters}
\end{table}

\begin{table}[!ht]
 \normalsize
\caption{The numerical results of RSE, IT, and CPU for the CME-RK,  GRBK($30$, $30$),  GRBK($50$, $50$), ARBK($30$, $30$), and ARBK($50$, $50$)  methods, where the parameters used in $A$ and $B$ for Example \ref{ARBK:example2} are from Table \ref{ARBK:tab:ex2-Parameters}.}
\centering
\begin{tabular}{|c|ccc| ccc|}
\cline{1-7}
&\multicolumn{3}{|l|}{Case I} &\multicolumn{3}{|l|}{Case II}  \\
\cline{1-7}
Methods&RSE&IT&CPU&RSE&IT&CPU\\
\cline{1-7}
CME-RK&$>8.23\times 10^{-2}$&$>10^5$&$>23.34$&$>1.21\times 10^{-1}$&$>10^5$&$>78.88$\\
GRBK($30$, $30$)&$5.00\times 10^{-2}$&$94049.6$&$103.61$&$>5.26\times 10^{-2}$&$>10^5$&$>104.11$\\
GRBK($50$, $50$)&$5.00\times 10^{-2}$&$8293.2$&$17.98$&$5.00\times 10^{-2}$&$14311.4$&$29.62$\\
ARBK($30$, $30$)&$5.00\times 10^{-2}$&$3279.4$&$7.65$&$5.00\times 10^{-2}$&$7251.0$&$20.33$\\
ARBK($50$, $50$)&$5.00\times 10^{-2}$&$1135.2$&$3.79$&$5.00\times 10^{-2}$&$2567.2$&$11.38$\\
\cline{1-7}
&\multicolumn{3}{|l|}{Case III} &\multicolumn{3}{|l|}{Case IV}  \\
\cline{1-7}
Methods&RSE&IT&CPU&RSE&IT&CPU\\
\cline{1-7}
CME-RK           &$>1.03\times 10^{-1}$&$>10^5$&$>386.53$&$5.05\times 10^{-2}$&$98963.2$&$942.54$\\
GRBK($30$, $30$) &$>9.60\times 10^{-2}$&$>10^5$&$>158.49$&$>6.09\times 10^{-2}$&$>10^5$&$>153.94$\\
GRBK($50$, $50$) &$>5.16\times 10^{-2}$&$>10^5$&$>296.61$&$5.00\times 10^{-2}$&$33919.6$&$105.34$\\
ARBK($30$, $30$) &$5.00\times 10^{-2}$&$7458.5$&$57.61$&$5.00\times 10^{-2}$&$2484.6$&$42.26$\\
ARBK($50$, $50$) &$5.00\times 10^{-2}$&$3685.5$&$40.85$&$5.00\times 10^{-2}$&$1198.4$&$27.05$\\
\cline{1-7}
\end{tabular}
\label{ARBK:tab:ex2-NumericalResults}
\end{table}

%
%
%Progressive iterative approximation for least squares (LSPIA) \cite{14DL} is a typical representative in the class of GBI methods, whose adjust vector is given by
%\begin{align*}
%  \Delta_{h,t}^{(k)} = \mu \sum_{i=1}^m \sum_{j=1}^q  \left (  Q_{i,j} - S^{(k)}(x_i,y_j) \right ) \phi_{h}(x_i)\phi_{t}(y_j)
%\end{align*}
%with $\mu$ being a constant, $\left\{ x_i \right\}_{i=1}^m$ and $\left\{ y_j \right\}_{j=1}^q$ being two parameter sequences, and the $k$th blending surface
%\begin{align*}
%  S^{(k)}(x,y) = \sum_{h=1}^n\sum_{t=1}^p P_{h,t}^{(k)} \phi_{h}(x)\phi_{t}(y).
%\end{align*}
%We refer to Equations (14)-(17) in this reference for more details.
%
%

%The implementation details of ARBK tensor product surface fitting are presented as follows. We first arrange the data points into a three-order tensor $Q = \left[Q_1;~Q_2;~Q_3 \right] \in \mathbb{R}^{m \times p \times 3}$ and input the initial values, including
%two collocation matrices $A$ and $B^T$,
%initial vectors $P^{(0,1)}\in \mathbb{R}^{n\times n}$ (resp., $P^{(0,2)}$, $P^{(0,3)}$),
%and the right-hand side $Q_1$ (resp., $Q_2$, $Q_3$), where the collocation matrices and data points are generated by Example \ref{Ex:CAGD-AXB=C}. Then, we compute the next control point matrix $P^{(k,1)}$ (resp., $P^{(k,2)}$, $P^{(k,3)}$) using the update rules in Algorithms \ref{alg:ME-RGRK}, \ref{alg:PmRGRK}, and \ref{alg:NmRGRK}. As a result, the approximate surface is formulated according to formula \eqref{eq:surface_S(x)}.

\vskip 1.25ex

\begin{example}\label{ARBK:Ex:CAGD-AXB=C}
Assume that there are $m \times q$ data points to be fitted and sampled from the following surfaces whose coordinates are given by
\vskip 0.45ex
\begin{enumerate}[$\diamond$]
\setlength{\itemindent}{0.5cm}
\addtolength{\itemsep}{-0.1em} % 设置文献之间的间距
\item Data points 1: $x=s$, $y=t$, and $z=s^3 - 3st^2$ for $1/2 \leq t \leq 1$ and $0 \leq s \leq 2 \pi$; \vskip 0.45ex
\item Data points 2: $x=s$, $y=t$, and $z=(s-s^3-t^5)e^{-s^2-t^2}$ for $-1\leq s,t \leq 1$;
\end{enumerate}
\vskip 0.45ex
see also http://paulbourke.net/geometry/. As other researchers do, we first assign two parameter sequences $\nu_1$ and $\nu_2$, and two knot vectors $\mu_1$ and $\mu_2$ of cubic B-spline basis, whose formulations can be  respectively referred by Equations (9.5) and (9.69) in the book \cite{97PT}. Then, the collocation matrices are generated by using the MATLAB built-in function such as $A={\sf spcol}(\nu_1, 4, \mu_1)$ and $B^T={\sf spcol}(\nu_2, 4, \mu_2)$.
\end{example}

\vskip 1.25ex

The least squares progressive iterative approximation (LSPIA) \cite{14DL} is a famous data fitting method in computer-aided geometry design. In the following, we will compare the performance of our method with LSPIA for surface fitting. In this example, the collection matrices are sparse and of full-column rank.

We first list the numerical results, including the  relative solution error, the number of iteration steps, and the computing time, with different sizes of data points in Table \ref{ARBK:tab:Ex3-1}. It is observed that the relative solution errors of the tested methods are analogous while ARBK($50$, $50$) takes  much fewer iteration counts and computing times than LSPIA  for all cases. It indicates that ARBK($50$, $50$) is more efficient than LSPIA. Then, in Figure \ref{ARBK:fig:Ex3-1}, we draw the two initial data points with size $m=q=150$ in Example \ref{ARBK:Ex:CAGD-AXB=C}. Accordingly, we also respectively display the final cubic B-spline LSPIA and ARBK($50$, $50$) fitting surfaces in Figures  \ref{ARBK:fig:Ex3-21} and \ref{ARBK:fig:Ex3-22}. It shows that the proposed method approximates the given data points accurately.

\begin{table}[!htb]
 \normalsize
\caption{The numerical results of RSE, IT, and CPU for LSPIA and ARBK($50$, $50$) in Example  \ref{ARBK:Ex:CAGD-AXB=C} for various $m$ and $n$ with $q=m$ and $p=n$.}
\centering
\begin{tabular}{ |c|ccccc|}
\cline{1-6}
\textbf{Surface~1}& ($m$, $n$)        & (150, 50) & (350, 50)& (600, 200) & (1000, 200)\\
\cline{1-6}
LSPIA  &{RSE}&$4.99\times 10^{-2}$& $4.98\times 10^{-2}$ & $4.99\times 10^{-2}$ &$4.99\times 10^{-2}$\\
       &{IT}        &505  &505  & 582  & 556\\
       &{CPU}       &0.29 &1.01 & 16.35&44.17\\
\cline{1-6}
ARBK($50$, $50$) &{RSE}&$8.57\times 10^{-2}$& $2.83\times 10^{-2}$ & $1.98\times 10^{-2}$ &$1.99\times 10^{-2}$\\
       &{IT}        &12.8& 85.0  & 86.2 &465.8\\
       &{CPU}       &0.03& 0.34  & 1.02 &6.46\\
\cline{1-6}
\textbf{Surface~2}& ($m$, $n$)        & (150, 50) & (350, 50)& (600, 200) & (1000, 200)\\
\cline{1-6}
LSPIA  &{RSE}&$4.99\times 10^{-2}$& $5.00\times 10^{-2}$ & $4.89\times 10^{-2}$ &$4.99\times 10^{-2}$\\
       &{IT}        &633  &636  & 676  & 644\\
       &{CPU}       &0.31 &1.17 & 18.39&50.07\\
\cline{1-6}
ARBK($50$, $50$) &{RSE}&$1.11\times 10^{-2}$& $1.89\times 10^{-2}$ & $1.38\times 10^{-2}$ &$3.81\times 10^{-2}$\\
       &{IT}        &10.4& 123.8  & 75.4 &356.2\\
       &{CPU}       &0.03& 0.33  & 0.81 &4.71\\
\cline{1-6}
\end{tabular}
\label{ARBK:tab:Ex3-1}
\end{table}

\begin{figure}[!htb]
\centering
    \subfigure{
    \includegraphics[width=0.48\textwidth]{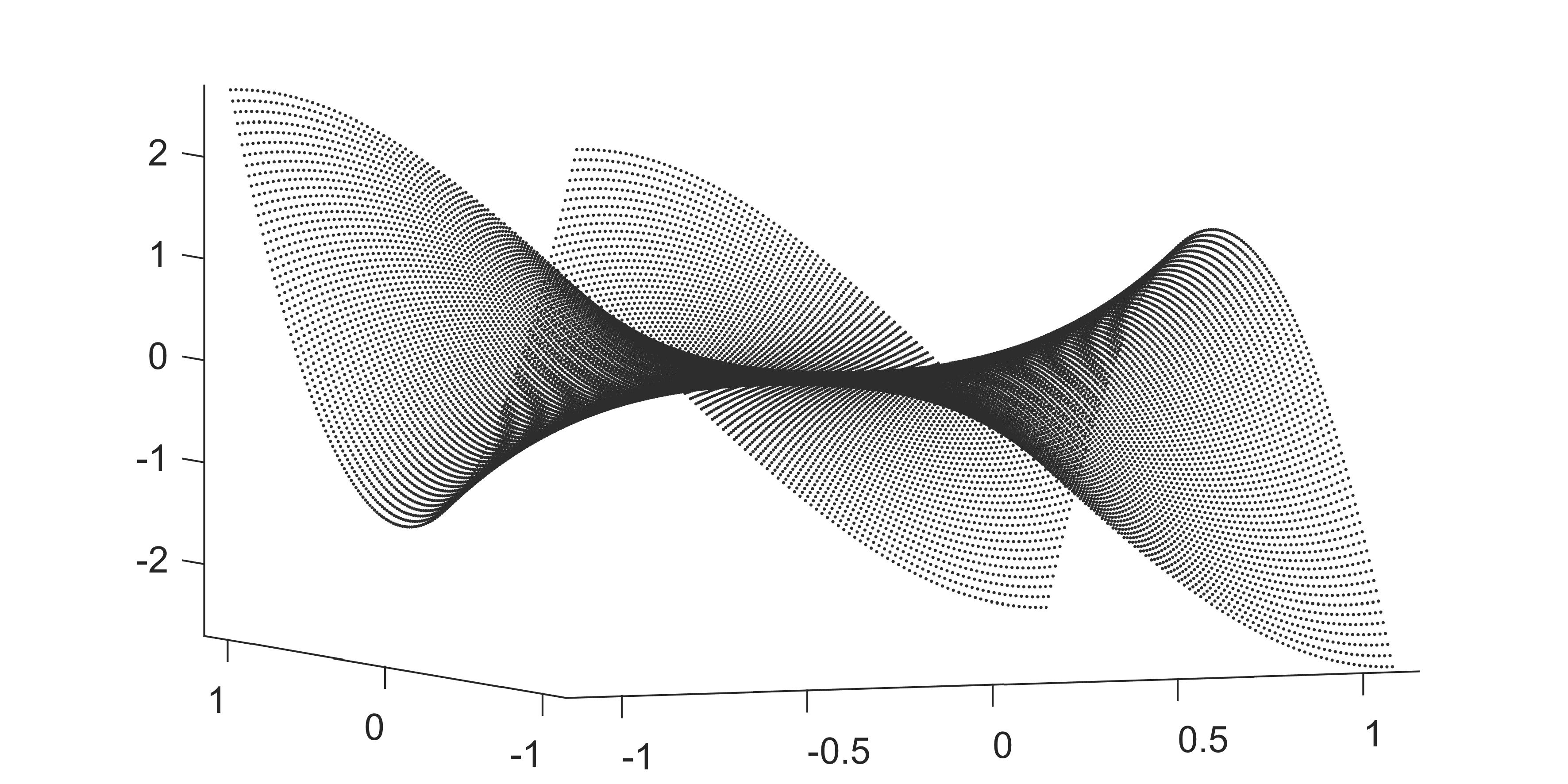}}
    \subfigure{
    \includegraphics[width=0.48\textwidth]{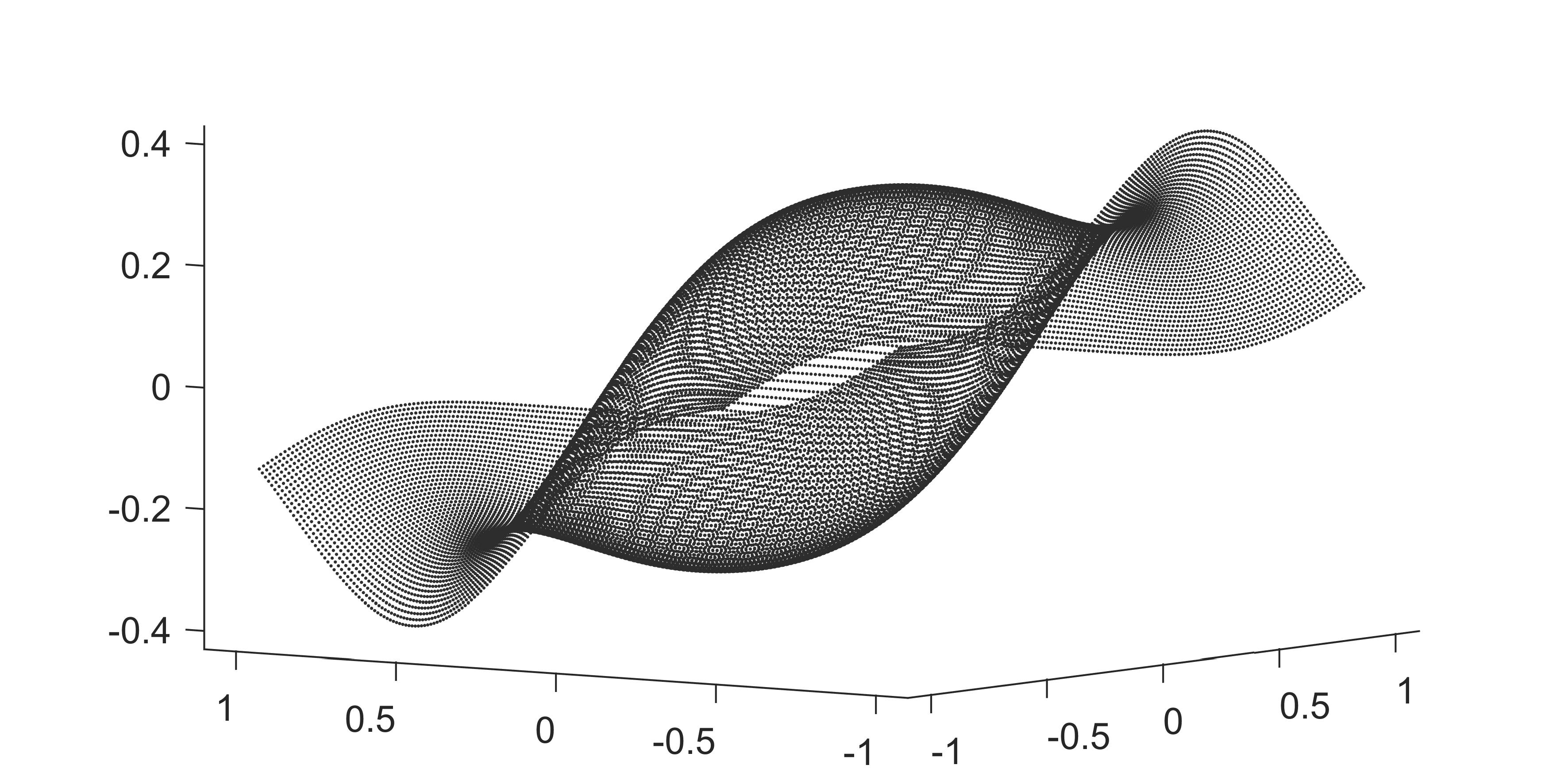}
    }
\caption{The $150 \times 150$ initial data points $1$ (left) and $2$ (right) in Example \ref{ARBK:Ex:CAGD-AXB=C}.}
\label{ARBK:fig:Ex3-1}
\end{figure}

\begin{figure}[!htb]
\centering
    \subfigure{
	     \includegraphics[width=0.48\textwidth]{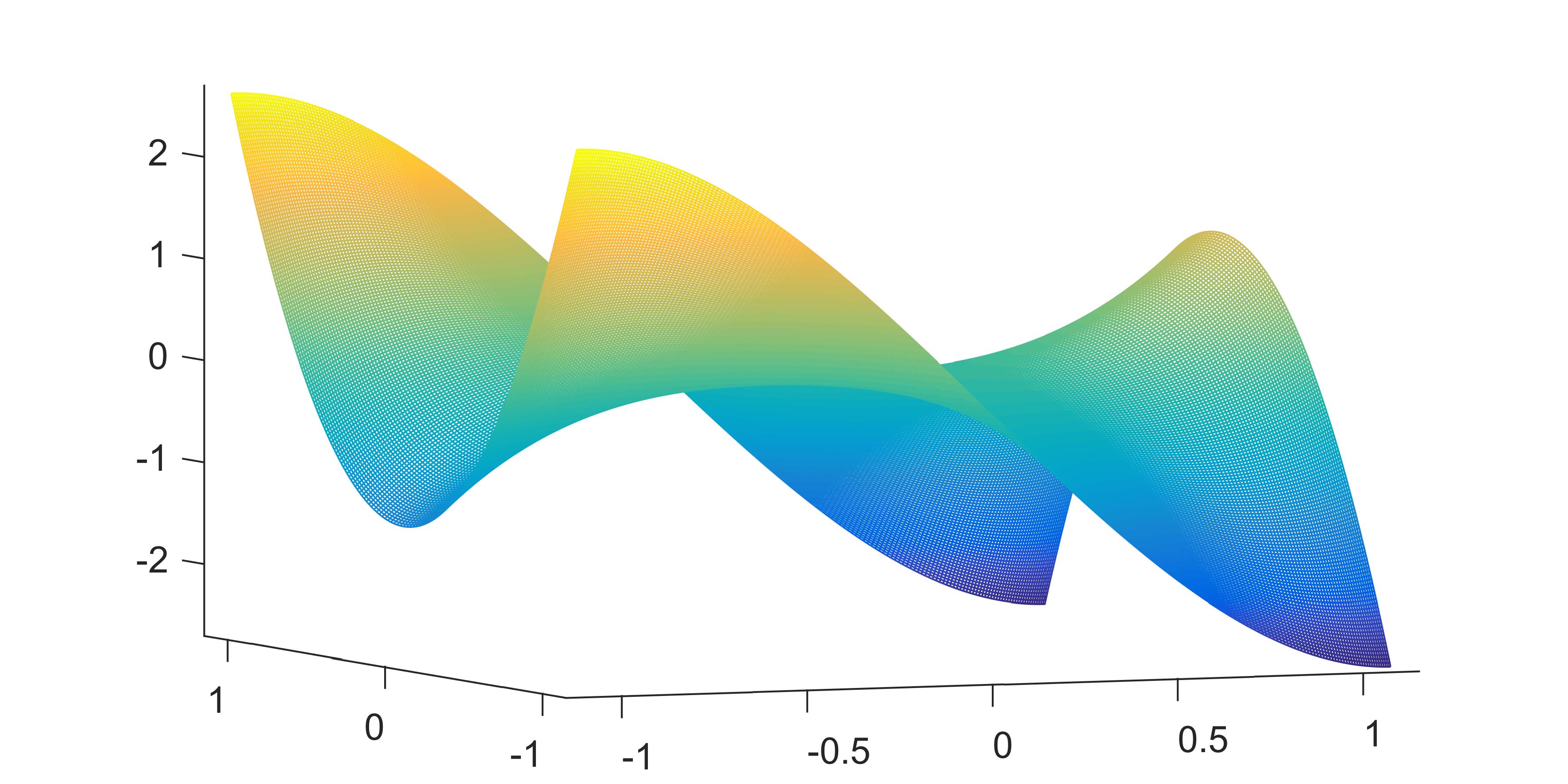}}
    \subfigure{
	     \includegraphics[width=0.48\textwidth]{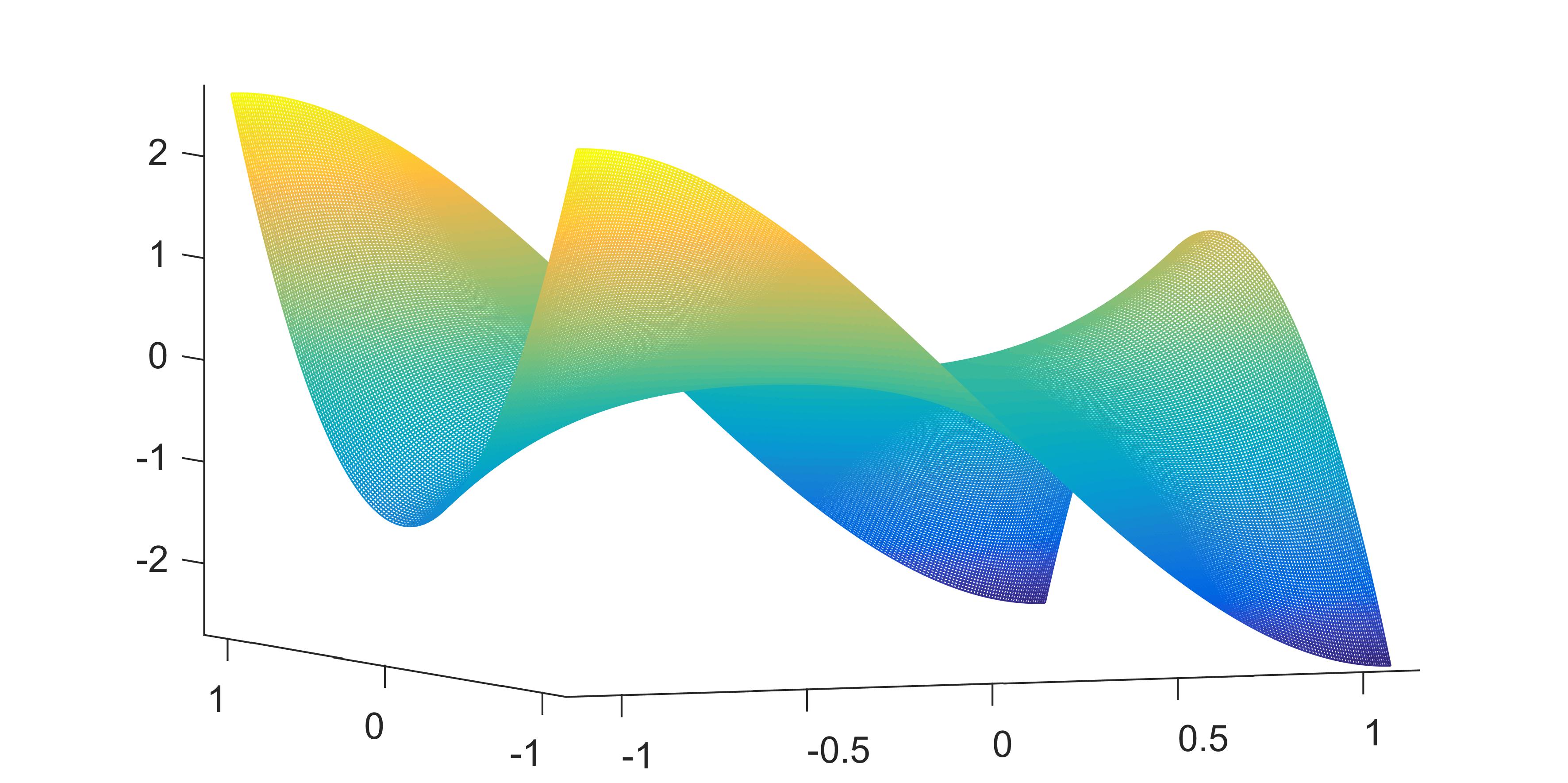}}
\caption{The cubic B-spline LSPIA (left) and ARBK($50$, $50$) (right) fitting surfaces using $50 \times 50$ control coefficients to fit data points $1$ in Example \ref{ARBK:Ex:CAGD-AXB=C}.}
\label{ARBK:fig:Ex3-21}
\end{figure}

\begin{figure}[!htb]
\centering
    \subfigure{
	     \includegraphics[width=0.48\textwidth]{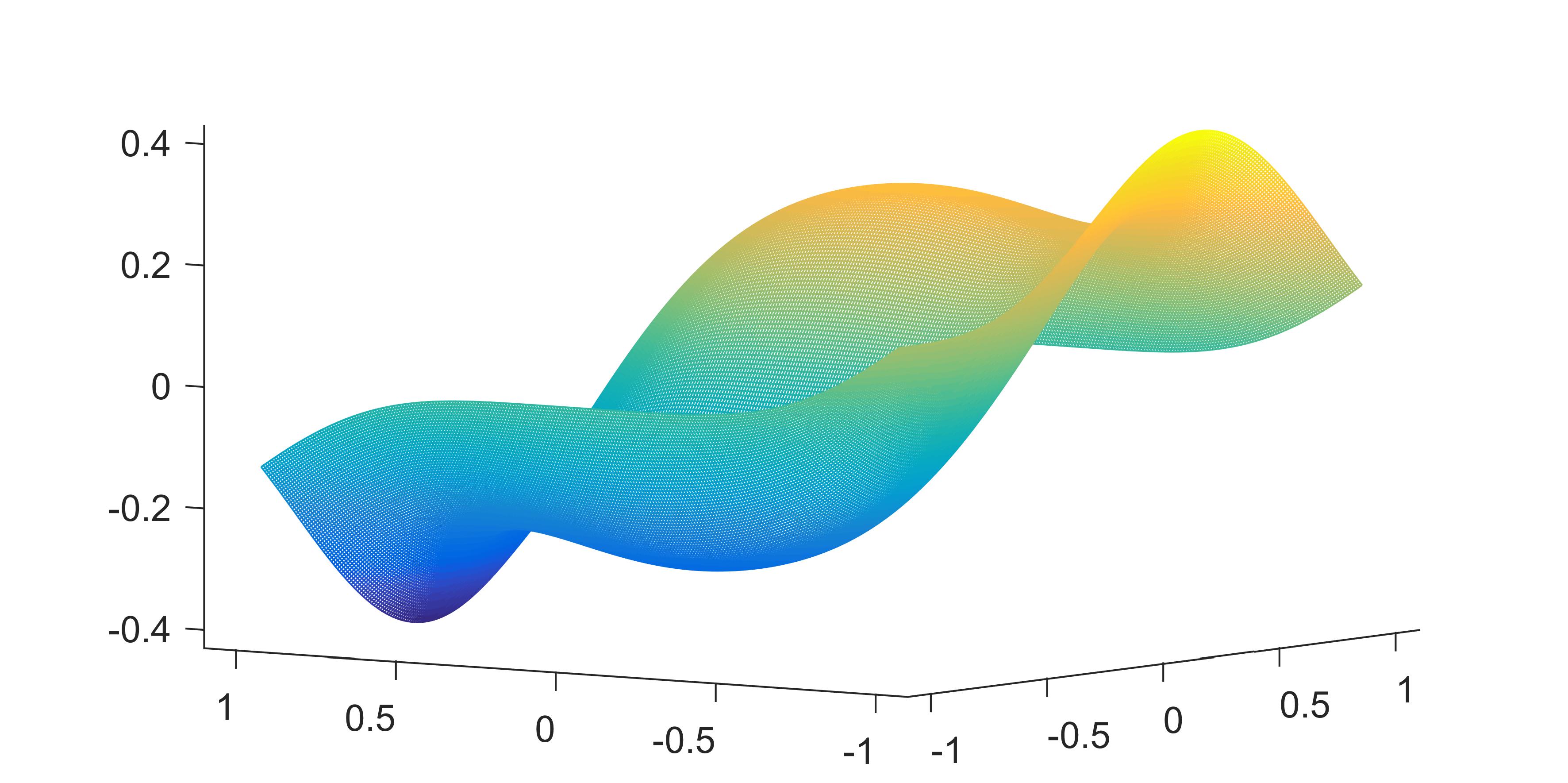}}
    \subfigure{
	     \includegraphics[width=0.48\textwidth]{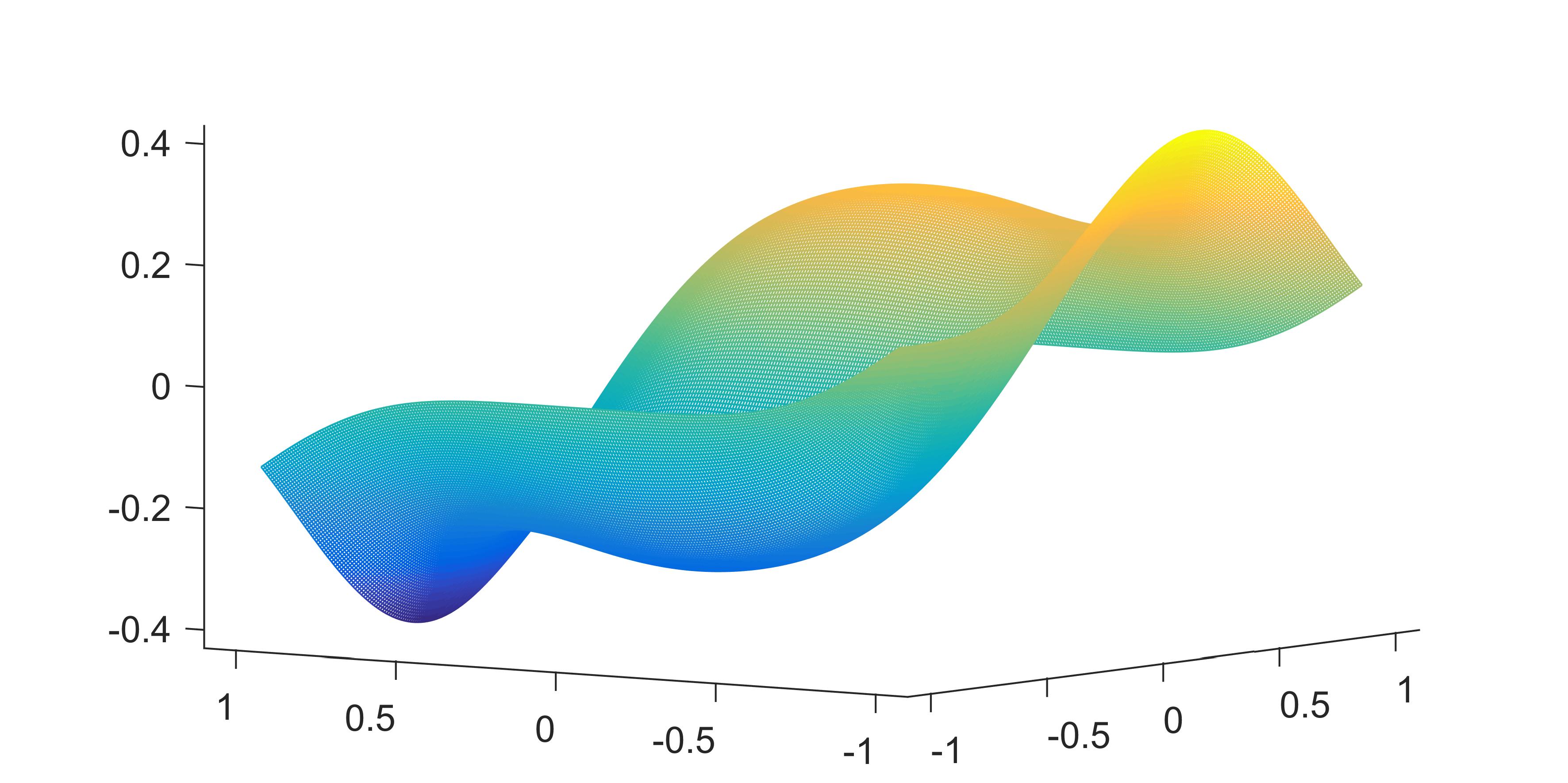}}
\caption{The cubic B-spline LSPIA (left) and ARBK($50$, $50$) (right) fitting surfaces using $50 \times 50$ control coefficients to fit data points $2$  in  Example \ref{ARBK:Ex:CAGD-AXB=C}.}
\label{ARBK:fig:Ex3-22}
\end{figure}
\section{Conclusions}\label{AXB=F:Conclusion}
The classical block Kaczmarz method has been developed to solve the over-determined linear systems. It operates by projecting the current point onto the solution space of a subset of constraints at each iteration. In this work, we replace the random single row selection $A_{i,:}$ and column selection $B_{:,j}$ in the CME-RK method \cite{23XBL} with the random  multiple row selection $A_{U_i,:}$ and column selection $B_{:,V_j}$, respectively, and present an alternating randomized block Kaczmarz method. The key innovation of this method is alternately utilizing the randomized block iteration scheme. The convergence analysis demonstrates that the resulting algorithm achieves a linear convergence rate bounded by an explicit expression. Numerical results illustrate that the proposed method is a competing randomized algorithm for solving the general consistent matrix equation.

Finally, we point out that finding the optimal value of block-sizes in our method is a technical and skillful issue. This parameter is determined by various factors such as the concrete structure and property of the coefficient matrices. In this work, we have not discussed how to determine these parameters effectively for practical examples. This topic is of real value and theoretical importance. We will investigate this in detail in the future.

\vskip 2ex
\noindent{\large\bf Declarations}
\vskip 1.15ex

\noindent{\bf Ethics approval.} Not applicable.
\vskip 1.15ex

\noindent{\bf Availability of supporting data.}  Data sharing not applicable to this article as no datasets were generated or analyzed during the current study.
\vskip 1.15ex

\noindent{\bf Competing interests.} This study does not have any conflicts to disclose.
\vskip 1.15ex

\noindent{\bf Funding.} The authors express their appreciation for supports provided by the National Natural Science Foundation of China under grant  12201651 and the Fundamental Research Funds for the Central Universities, South-Central University for Nationalities under grant CZQ23004. The third author was also supported by the Natural Science Foundation of Shanxi Province under grant  20210302123480 and the Research Project Supported by
Shanxi Scholarship Council of China (2023-117). These supports are gratefully acknowledged.
\vskip 1.15ex

\noindent{\bf Author's contributions.} All authors contributed equally to the manuscript and read and approved the final manuscript.
\vskip 1.15ex

%\noindent{\bf Acknowledgment.} Not applicable.

\end{document}